\newtheorem{theorem}{Theorem}[section]
\newtheorem{lemma}[theorem]{Lemma}
\newtheorem{e-proposition}[theorem]{Proposition}
\newtheorem{definition}[theorem]{Definition\rm}
\newtheorem{remark}{\it Remark\/}
\newtheorem{example}{\it Example\/}
\begin{document}
\title{On the geometry of lightlike submanifolds of indefinite statistical manifolds}
\author{Varun Jain, Amrinder Pal Singh and Rakesh Kumar}
\date{}
\maketitle
\begin{abstract}
We study lightlike submanifolds of indefinite statistical manifolds. Contrary to the classical theory of submanifolds of statistical manifolds, lightlike submanifolds of indefinite statistical manifolds need not to be statistical submanifold. Therefore we obtain some conditions for a lightlike submanifold of indefinite statistical manifolds to be a lightlike statistical submanifold. We derive the expression of statistical sectional curvature and finally obtain some conditions for the induced statistical Ricci tensor on a lightlike submanifold of indefinite statistical manifolds to be symmetric.
\end{abstract}
\textbf{2010 Mathematics Subject Classification:} 53B05, 53B30, 53C40.\\
\textbf{Keywords:} Indefinite statistical manifolds, lightlike submanifolds, statistical curvature tensor, induced statistical Ricci tensor.\\

\section{Introduction}
Information geometry uses tools of differential geometry to study statistical inference, information loss, and
estimation. In fact the set of normal distributions
$$
p(x; \mu, \sigma) = \frac{1}{\sqrt{2\pi}\sigma}e^{- \frac{(x - \mu)^{2}}{2\sigma^{2}}},\quad x\in\mathbb{R},
$$
with $(\mu, \sigma)\in\mathbb{R}\times(0, +\infty)$, can be considered as a two-dimensional surface and the amount of information between the distributions is measured by the endowed Riemannian metric, the Fisher information metric. Then the family of normal distributions $p(x; \mu, \sigma)$ becomes a space of constant negative curvature and any normal distribution can be visualized as a point in the Poincare upper-half plane. Furthermore, the notion of statistical manifolds was studied in terms of information geometry. Statistical manifolds \cite{amari1} are inspired from statistical model, where the density function, the Fisher information matrix, the skewness tensor (which measures the cummulants of third order), the dual connections $\nabla^{(-1)}$ and $\nabla^{(1)}$ are replaced by an arbitrary Riemannian manifold $\tilde{M}$, the Riemannian metric $\tilde{g}$ of $\tilde{M}$, a $3$-covariant skewness tensor, the dual connections $\tilde{\nabla}$ and $\tilde{\nabla}^{*}$, respectively. Since the geometry of statistical manifolds includes dual connections which are similar to the conjugate connections of the affine geometry, therefore the geometry of statistical manifolds is related to affine differential geometry. The geometry of statistical manifolds has significant applications in various fields of science and engineering but very limited information available.\\
\indent To fill up important missing parts in the general theory of submanifolds, Duggal and Bejancu \cite{1} introduced the notion of lightlike submanifolds of semi-Riemannian manifolds and further developed by many others, see \cite{klds} and many references therein. The geometry of lightlike submanifolds has extensive uses in mathematical physics, particularly, in general theory of relativity. Moreover, the theory of lightlike submanifolds has
interaction with some results on Killing horizon, electromagnetic and radiation fields and asymptotically flat spacetimes. Therefore the study of lightlike submanifolds is an active area of study in the geometry of submanifolds.\\
\indent Although the notion of lightlike submanifolds of semi-Riemannian manifolds is well known, the one for indefinite statistical manifolds is not yet established. In this paper, our aim is to establish the theory of lightlike submanifolds of indefinite statistical manifolds. We obtain some conditions for a lightlike submanifold of indefinite statistical manifolds to be a lightlike statistical submanifold. We derive the expression of statistical sectional curvature and finally obtain some conditions for the induced statistical Ricci tensor on a lightlike submanifold of indefinite statistical manifolds to be symmetric.
\section{Lightlike submanifolds}
In this paper, we consider smooth manifolds and $\Gamma(T\bar{M})$, $\Gamma(T\bar{M}^{(p, q)})$ means the set of all vector fields and the set of all tensor fields of the type $(p, q)$ on the smooth manifold $\bar{M}$, respectively.\\
\indent Let $(\bar{M}, \bar{g})$ be a real $(m + n)$-dimensional semi-Riemannian manifold of constant index $q$ such that $m, n \geq 1$, $1\leq q \leq m + n - 1$ and $(M, g)$ be an $m$-dimensional submanifold of $\bar{M}$ and $g$ be the induced metric of $\bar{g}$ on $M$. If $\bar{g}$ is degenerate on the tangent bundle $TM$ of $M$ then $M$ is called a lightlike submanifold of $\bar{M}$. For a degenerate metric $g$ on $M$, $T_{x}M^{\bot}$ is a degenerate $n$-dimensional subspace of $T_{x}\bar{M}$. Thus, both $T_{x}M$ and $T_{x}M^{\bot}$ are degenerate orthogonal subspaces but no longer complementary. In this case, there exists a subspace $Rad(T_{x}M) = T_{x}M \cap T_{x}M^{\bot}$, known as radical (null) subspace. If the mapping $Rad(TM) : x\in M \longrightarrow Rad(T_{x}M)$, defines a smooth distribution on $M$ of rank $r>0$ then submanifold $M$ of $\bar{M}$ is called an $r$-lightlike submanifold
and $Rad(TM)$ is called the radical distribution on $M$. Screen distribution $S(TM)$ is a semi-Riemannian complementary distribution of $Rad(TM)$ in $TM$, that is, $TM = Rad(TM) \bot S(TM)$. Let $S(TM^{\bot})$ be a complementary vector subbundle to $Rad(TM)$ in $TM^{\bot}$ which is also non-degenerate with respect to $\bar{g}$. Let $tr(TM)$ be complementary (but not orthogonal) vector bundle to $TM$ in $T\bar{M}\mid _M$ then $tr(TM) = ltr(TM) \bot S(TM^{\bot})$, where $ltr(TM)$ is complementary to $Rad(TM)$ in $S(TM^{\bot})^{\bot}$ and is an arbitrary lightlike transversal vector bundle of $M$. Thus we have $T\bar{M}\mid_{M} = TM \oplus tr(TM) = (Rad(TM) \oplus ltr(TM)) \bot S(TM) \bot S(TM^{\bot})$, (for detail see \cite{1}). Let $\mathcal{U}$ be a local coordinate neighborhood of M then local quasi-orthonormal field of frames on $\bar{M}$ along M is $\{\xi_{1},...,\xi_{r}, X_{r+1},..., X_{m}, N_{1},..., N_{r}, W_{r+1},..., W_{n}\},$ where $\{\xi_{i}\}_{i=1}^{r}$ and $\{N_{i}\}_{i=1}^{r}$ are lightlike basis of $\Gamma(Rad(TM)|_{\mathcal{U}})$ and $\Gamma(ltr(TM)|_{\mathcal{U}})$, respectively and $\{X_{\alpha}\}_{\alpha=r+1}^{m}$ and $\{W_{a}\}_{a=r+1}^{n}$ are orthonormal basis of $\Gamma(S(TM)|_{\mathcal{U}})$ and $\Gamma(S(TM^{\bot})|_{\mathcal{U}})$, respectively. These local quasi-orthonormal field of frames on $\bar{M}$ satisfy
$$
\bar{g}(N_{i}, \xi_{j}) = \delta^{i}_{j},\quad \bar{g}(N_{i}, N_{j}) = \bar{g}(N_{i}, X_{\alpha}) = \bar{g}(N_{i}, W_{a}) = 0.
$$
\noindent Let $\tilde{\nabla}$ be the Levi-Civita connection on $\bar{M}$, then Gauss and Weingarten formulae are
\begin{equation}\label{eq:1}
\tilde{\nabla}_{X}Y = \nabla_{X}Y + h(X, Y),\quad \tilde{\nabla}_{X}U = - A_{U}X + \nabla^{t}_{X}U,
\end{equation}
for $X, Y \in \Gamma(TM)$ and $U \in \Gamma(tr(TM))$, where $\{\nabla_{X}Y, A_{U}X\}$ and $\{h(X, Y), \nabla^{t}_{X} U\}$ belongs to $\Gamma(TM)$ and $\Gamma(tr(TM))$, respectively. Here $\nabla$ is a torsion-free linear connection on $M$, $h$ is a symmetric bilinear form on $TM$ which is called the second fundamental form, $A_{U}$ is a linear operator on $M$ and known as the shape operator. Considering the projection morphisms $L$ and $S$ of $tr(TM)$ on $ltr (TM)$ and $S(TM^{\bot})$, respectively, then (\ref{eq:1}) becomes
\begin{equation}\label{eq:2}
\tilde{\nabla}_{X}Y = \nabla_{X}Y + h^{l}(X, Y) + h^{s}(X, Y),\quad
\tilde{\nabla}_{X}U = - A_{U}X + D_{X}^{l}U + D_{X}^{s}U,
\end{equation}
where $h^{l}(X, Y) = L(h(X, Y))$, $h^{s}(X, Y) = S(h(X, Y))$, $D_{X}^{l}U = L(\nabla^{\bot}_{X}U)$, $D_{X}^{s}U = S(\nabla^{\bot}_{X}U)$. As $h^{l}$ and $h^{s}$ are $\Gamma(ltr (TM))$-valued and $\Gamma(S(TM^{\bot}))$-valued, respectively, therefore they are called as the lightlike second fundamental form and the screen second fundamental form on $M$. In particular, we have
\begin{equation}\label{eq:3}
\tilde{\nabla}_{X}N = - A_{N}X + \nabla_{X}^{l}N + D^{s}(X, N),\quad
\tilde{\nabla}_{X}W = - A_{W}X + \nabla_{X}^{s}W + D^{l}(X, W),
\end{equation}
where $X \in \Gamma(TM)$, $N \in \Gamma(ltr (TM))$ and $W \in \Gamma(S(TM^{\bot}))$. Then using (\ref{eq:2}) and (\ref{eq:3}), we have
\begin{equation}\label{eq:4}
\bar{g}(h^{s}(X, Y), W) + \bar{g}(Y, D^{l}(X, W)) = g(A_{W}X, Y).
\end{equation}
Let $P$ be the projection morphism of $T M$ on $S(TM)$ then
\begin{equation}\label{eq:5}
\nabla_{X}PY = \nabla^{'}_{X}PY + h^{'}(X, PY),\quad \nabla_{X}\xi = - A^{'}_{\xi}X + \nabla^{'t}_{X}\xi,
\end{equation}
where $\{\nabla^{'}_{X}PY, A^{'}_{\xi}X\}$ and $\{h^{'}(X, Y), \nabla^{'t}_{X}\xi\}$ belongs to $\Gamma(S(TM))$ and $\Gamma(Rad(TM))$, respectively. $\nabla^{'}$ and $\nabla^{'t}$ are linear connections on $S(TM)$ and $Rad(TM)$, respectively. $h^{'}$ and $A^{'}$ are $\Gamma(Rad(TM))$-valued and
$\Gamma(S(TM))$-valued bilinear forms and they are called as the second fundamental forms of distributions $S(TM)$ and $Rad(TM)$, respectively. Using (\ref{eq:2}) and (\ref{eq:5}), we obtain
\begin{equation}\label{eq:6}
\bar{g}(h^{l}(X, PY), \xi) = g(A^{'}_{\xi}X, PY),\quad \bar{g}(h^{'}(X, PY), N) = g(A_{N}X, PY),
\end{equation}
for any $X, Y\in \Gamma(TM)$, $\xi\in\Gamma(Rad(TM))$ and $N\in\Gamma(ltr(TM))$.\\
\indent From the geometry of non-degenerate submanifolds, it is known that the induced connection $\nabla$ on a non-degenerate submanifold is always a metric connection. Unfortunately, this is not true for lightlike submanifolds and particularly satisfies
\begin{equation}\label{eq:7}
(\nabla_{X}g)(Y, Z) = \bar{g}(h^{l}(X, Y), Z) + \bar{g}(h^{l}(X, Z), Y),
\end{equation}
for any $X, Y, Z \in \Gamma(TM)$.
\section{Lightlike Submanifolds of Indefinite Statistical Manifolds}
Let $(\bar{M}, \bar{g})$ be a semi-Riemannian manifold equipped with a semi-Riemannian metric $\bar{g}$ of constant index $q$ and $\bar{\nabla}$ be an affine torsion free connection on $\bar{M}$. A pair $(\bar{\nabla}, \bar{g})$ is called a statistical structure on $\bar{M}$ if $\bar{\nabla}$ is torsion free and the Codazzi equation
\begin{equation}\label{eq:8}
(\bar{\nabla}_{X}\bar{g})(Y, Z) = (\bar{\nabla}_{Y}\bar{g})(X, Z),
\end{equation}
holds for any vector fields $X, Y$ and $Z$ of $\bar{M}$. If $(\bar{\nabla}, \bar{g})$ is a statistical structure on a semi-Riemannian manifold $\bar{M}$ then the triplet $(\bar{M}, \bar{g}, \bar{\nabla})$ is called an indefinite statistical manifold. The affine connection $\bar{\nabla}^{*}$ which is also assumed to be torsion free on $(\bar{M}, \bar{g}, \bar{\nabla})$ is called the dual connection of $\bar{\nabla}$ with respect to $\bar{g}$ if it satisfies
\begin{equation}\label{eq:9}
X\bar{g}(Y, Z) = \bar{g}(\bar{\nabla}_{X}Y, Z) + \bar{g}(Y, \bar{\nabla}^{*}_{X}Z),
\end{equation}
for any vector fields $X, Y$ and $Z$ of $\bar{M}$. If $(\bar{\nabla}, \bar{g})$ is a statistical structure on $\bar{M}$ then so is $(\bar{\nabla}^{*}, \bar{g})$ and furthermore $(\bar{\nabla}^{*})^{*} = \bar{\nabla}$. Therefore now onwards we denote an indefinite statistical manifold by $(\bar{M}, \bar{g}, \bar{\nabla}, \bar{\nabla}^{*})$. Let $\bar{\nabla}^{\bar{g}}$ be the Levi-Civita connection of $\bar{g}$ then we have $\bar{\nabla}^{\bar{g}} = \frac{1}{2}(\bar{\nabla} + \bar{\nabla}^{*})$. Moreover from (\ref{eq:8}), it is clear that a semi-Riemannian manifold is always a statistical manifold and $(\bar{\nabla}^{\bar{g}})^{*} = \bar{\nabla}^{\bar{g}}$.\\
\indent In fact the notion of statistical structure comes from information geometry, for details see \cite{amari}. Let $p(\cdot, \theta): (\chi, dx)\rightarrow (0, \infty)$ be the probability density parameterized by $\theta = (\theta^{1},\ldots, \theta^{n})\in\Theta\subset\mathbb{R}^{n}$. Then for any $\alpha\in\mathbb{R}$, we have
$$
g_{\theta} = \sum\Big\{\int_{\chi}\frac{\partial\log p}{\partial\theta^{i}}(x, \theta)\frac{\partial\log p}{\partial\theta^{j}}(x, \theta)p(x, \theta)dx\Big\}d\theta^{i}d\theta^{j},
$$
and
$$
\Gamma_{ijk}^{(\alpha)}(\theta) = \int_{\chi}\Big\{\frac{\partial^{2}\log p}{\partial\theta^{i}\partial\theta^{i}}(x, \theta) + \frac{1 - \alpha}{2}\frac{\partial\log p}{\partial\theta^{i}}(x, \theta)\frac{\partial\log p}{\partial\theta^{j}}(x, \theta)\Big\}\frac{\partial\log p}{\partial\theta^{k}}(x, \theta)p(x, \theta)dx.
$$
It is easy to see that $g_{\theta}$ is a positive semi-definite quadratic form on $T_{\theta}\Theta$. If $g$ is a Riemannian metric on $\Theta$ then $(\Theta, \nabla^{(\alpha)}, g)$ is a statistical manifold, where $\nabla^{(\alpha)}$ is an affine connection, known as the Amari's $\alpha$-connection with respect to $\{p(\cdot, \theta)|\theta\in\Theta\}$ and defined by $\Gamma_{ijk}^{(\alpha)} = g(\nabla^{(\alpha)}_{\frac{\partial}{\partial\theta^{i}}}\frac{\partial}{\partial\theta^{j}}, \frac{\partial}{\partial\theta^{k}})$\\
\indent Let $(\bar{\nabla}, \bar{g})$ be a statistical structure on $\bar{M}$ then the difference tensor field $K\in\Gamma(T\bar{M}^{(1, 2)})$ is given by (see \cite{hf})
\begin{equation}\label{eq:30}
K(X, Y) = \bar{\nabla}_{X}Y - \bar{\nabla}^{\bar{g}}_{X}Y,
\end{equation}
for any $X, Y\in\Gamma(T\bar{M})$ and the difference tensor field satisfies
\begin{equation}\label{eq:31}
K(X, Y) = K(Y, X),\quad \bar{g}(K(X, Y), Z) = \bar{g}(Y, K(X, Z)).
\end{equation}
Conversely, for a Riemannian metric $\bar{g}$ if the given $K\in\Gamma(T\bar{M}^{(1, 2)})$ satisfies (\ref{eq:31}) then a pair $(\bar{\nabla} = \bar{\nabla}^{\bar{g}} + K, \bar{g})$ becomes a statistical structure on $\bar{M}$ and also
\begin{equation}\label{eq:32}
K = \bar{\nabla} - \bar{\nabla}^{\bar{g}} = \frac{1}{2}(\bar{\nabla} - \bar{\nabla}^{*}).
\end{equation}
\begin{theorem}
Let $(\bar{M}, \bar{g}, \bar{\nabla}, \bar{\nabla}^{*})$ be an indefinite statistical manifold and $\bar{\nabla}^{*}$ is a dual connection of $\bar{\nabla}$ with respect to the metric $\bar{g}$. Then
\begin{equation}\label{eq:36}
(\bar{\nabla}_{X}\bar{g})(Y, Z) + (\bar{\nabla}^{*}_{X}\bar{g})(Y, Z) = 0,
\end{equation}
for any $X, Y\in\Gamma(T\bar{M})$.
\end{theorem}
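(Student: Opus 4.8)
The plan is to prove the identity (\ref{eq:36}) by a direct tensorial computation starting from the coordinate-free expression for the covariant derivative of the $(0,2)$-tensor $\bar{g}$. First I would expand the two terms on the left-hand side using
\begin{equation*}
(\bar{\nabla}_{X}\bar{g})(Y, Z) = X\bar{g}(Y, Z) - \bar{g}(\bar{\nabla}_{X}Y, Z) - \bar{g}(Y, \bar{\nabla}_{X}Z),
\end{equation*}
and the analogous formula with $\bar{\nabla}$ replaced by $\bar{\nabla}^{*}$. Adding these two expansions produces the term $2X\bar{g}(Y, Z)$ together with four mixed terms built from $\bar{\nabla}$ and $\bar{\nabla}^{*}$ acting in the two argument slots of $\bar{g}$.

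The key step is to regroup these four mixed terms into two pairs, each of which reconstructs $X\bar{g}(Y, Z)$ through the duality relation (\ref{eq:9}). Pairing $\bar{g}(\bar{\nabla}_{X}Y, Z)$ with $\bar{g}(Y, \bar{\nabla}^{*}_{X}Z)$ reproduces $X\bar{g}(Y, Z)$ directly by (\ref{eq:9}). For the remaining two terms $\bar{g}(Y, \bar{\nabla}_{X}Z)$ and $\bar{g}(\bar{\nabla}^{*}_{X}Y, Z)$ I would invoke the symmetry of $\bar{g}$ to rewrite them as $\bar{g}(\bar{\nabla}_{X}Z, Y)$ and $\bar{g}(Z, \bar{\nabla}^{*}_{X}Y)$, so that (\ref{eq:9}) applied with the roles of $Y$ and $Z$ interchanged again yields $X\bar{g}(Z, Y) = X\bar{g}(Y, Z)$. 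Substituting back, the four mixed terms contribute $-2X\bar{g}(Y, Z)$, which cancels the $2X\bar{g}(Y, Z)$ coming from the two $X\bar{g}(Y, Z)$ factors, leaving zero as required.

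Since $\bar{\nabla}$ and $\bar{\nabla}^{*}$ are merely dual to one another with respect to $\bar{g}$ and neither is assumed metric, the computation never appeals to metric-compatibility: it uses only (\ref{eq:9}) and the symmetry of $\bar{g}$. Consequently there is no genuine analytic obstacle here, and the statement is essentially a restatement of the duality (\ref{eq:9}) in tensorial form. The only point requiring care is the correct pairing of each $\bar{\nabla}$-term with its matching $\bar{\nabla}^{*}$-term and the bookkeeping of which argument slot each connection acts on when (\ref{eq:9}) is applied; a mismatched pairing would fail to reproduce $X\bar{g}(Y, Z)$ and would obscure the cancellation.
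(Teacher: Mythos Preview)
Your proof is correct but follows a different route from the paper. The paper argues via the difference tensor $K = \bar{\nabla} - \bar{\nabla}^{\bar{g}} = \tfrac{1}{2}(\bar{\nabla} - \bar{\nabla}^{*})$: it first shows $(\bar{\nabla}_{X}\bar{g})(Y,Z) = -2\bar{g}(K(X,Y),Z)$ using that $\bar{\nabla}^{\bar{g}}$ is metric together with the symmetry property (\ref{eq:31}) of $K$, then derives the companion formula $(\bar{\nabla}^{*}_{X}\bar{g})(Y,Z) = 2\bar{g}(K(X,Y),Z)$, and adds. Your argument bypasses $K$ and the Levi-Civita connection entirely, working directly from the definition of the covariant derivative of $\bar{g}$ and pairing terms so that the duality relation (\ref{eq:9}) absorbs them. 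Your approach is shorter and more elementary, and it shows that (\ref{eq:36}) is really just a repackaging of (\ref{eq:9}); the paper's detour through $K$ is less direct but produces the explicit intermediate identities (\ref{eq:33}) and (\ref{eq:35}), which express each of $(\bar{\nabla}_{X}\bar{g})$ and $(\bar{\nabla}^{*}_{X}\bar{g})$ individually in terms of $K$ and may be of independent use.
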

\begin{proof}
Let $X, Y\in\Gamma(T\bar{M})$ then using (\ref{eq:30}), we obtain
$$
(\bar{\nabla}_{X}\bar{g})(Y, Z) = (\bar{\nabla}^{\bar{g}}_{X}\bar{g})(Y, Z) - \bar{g}(K(X, Y), Z) - \bar{g}(Y, K(X, Z),
$$
then using the fact that $\bar{\nabla}^{\bar{g}}$ is a metric connection with (\ref{eq:31}), we get
\begin{equation}\label{eq:33}
(\bar{\nabla}_{X}\bar{g})(Y, Z) = - 2\bar{g}(K(X, Y), Z).
\end{equation}
Analogously using (\ref{eq:31}) and (\ref{eq:32}), we also obtain
\begin{equation}\label{eq:34}
(\bar{\nabla}_{X}\bar{g})(Y, Z) = (\bar{\nabla}^{*}_{X}\bar{g})(Y, Z) - 4\bar{g}(K(X, Y), Z),
\end{equation}
then using (\ref{eq:33}) in (\ref{eq:34}), we have
\begin{equation}\label{eq:35}
(\bar{\nabla}^{*}_{X}\bar{g})(Y, Z) = 2\bar{g}(K(X, Y), Z).
\end{equation}
Hence from (\ref{eq:33}) and (\ref{eq:35}), the assertion follows.
\end{proof}
\indent Let $(M, g)$ be a lightlike submanifold of an indefinite statistical manifold $(\bar{M}, \bar{g}, \bar{\nabla}, \bar{\nabla}^{*})$ and let $\nabla$, $\nabla^{*}$ be the induced linear connections on $M$ from the connections $\bar{\nabla}$, $\bar{\nabla}^{*}$, respectively. Then using geometry of lightlike submanifolds of semi-Riemannian manifolds, the Gauss and Weingarten formulas for a lightlike submanifold of an indefinite statistical manifold $(\bar{M}, \bar{g}, \bar{\nabla}, \bar{\nabla}^{*})$ are given by
\begin{equation}\label{eq:10}
\bar{\nabla}_{X}Y = \nabla_{X}Y + h^{l}(X, Y) + h^{s}(X, Y),\quad \bar{\nabla}^{*}_{X}Y = \nabla^{*}_{X}Y + h^{*l}(X, Y) + h^{*s}(X, Y),
\end{equation}
\begin{equation}\label{eq:11}
\bar{\nabla}_{X}N = - A_{N}X + \nabla^{l}_{X}N + D^{s}(X, N),\quad \bar{\nabla}^{*}_{X}N = - A^{*}_{N}X + \nabla^{*l}_{X}N + D^{*s}(X, N),
\end{equation}
\begin{equation}\label{eq:12}
\bar{\nabla}_{X}W = - A_{W}X + \nabla^{s}_{X}W + D^{l}(X, W),\quad \bar{\nabla}^{*}_{X}W = - A^{*}_{W}X + \nabla^{*s}_{X}W + D^{*l}(X, W),
\end{equation}
for any $X, Y\in\Gamma(TM)$, $N\in\Gamma(ltr(TM))$ and $W\in\Gamma(S(TM^{\bot}))$. Let $P$ be the projection morphism of $TM$ on $S(TM)$ then following (\ref{eq:5}), we have
\begin{equation}\label{eq:12a}
\nabla_{X}PY = \nabla^{'}_{X}PY + h^{'}(X, PY),\quad \nabla^{*}_{X}PY = \nabla^{*'}_{X}PY + h^{*'}(X, PY),
\end{equation}
\begin{equation}\label{eq:12b}
\nabla_{X}\xi = - A^{'}_{\xi}X + \nabla^{'t}_{X}\xi,\quad \nabla^{*}_{X}\xi = - A^{*'}_{\xi}X + \nabla^{*'t}_{X}\xi.
\end{equation}
Since $\bar{\nabla}$ is a torsion free affine connection therefore using (\ref{eq:10}) in $\bar{\nabla}_{X}Y - \bar{\nabla}_{Y}X - [X, Y] = 0$ and then on equating the tangential components, it follows that the induced connection $\nabla$ is a torsion free linear connection and analogously $\nabla^{*}$ is also a torsion free linear connection.
\begin{lemma}
Let $(M, g)$ be a lightlike submanifold of an indefinite statistical manifold $(\bar{M}, \bar{g}, \bar{\nabla}, \bar{\nabla}^{*})$ then we have
\begin{equation}\label{eq:16}
(\nabla_{X}g)(Y, Z) - (\nabla_{Y}g)(X, Z) = \bar{g}(Y, h^{l}(X, Z)) - \bar{g}(X, h^{l}(Y, Z)),
\end{equation}
\begin{equation}\label{eq:13}
g(A_{W}X, Y) - g(X, A_{W}Y) = \bar{g}(D^{l}(X, W), Y) - \bar{g}(X, D^{l}(Y, W)),
\end{equation}
\begin{equation}\label{eq:15}
\bar{g}(h^{l}(X, \xi), Y) - \bar{g}(h^{l}(Y, \xi), X) = g(X, \nabla_{Y}\xi) - g(\nabla_{X}\xi, Y),
\end{equation}
\begin{equation}\label{eq:14}
(\nabla^{l}_{X}\bar{g})(Y, N) - (\nabla^{l}_{Y}\bar{g})(X, N) = g(A_{N}Y, X) - g(A_{N}X, Y),
\end{equation}
for any $X, Y, Z\in\Gamma(TM)$, $\xi\in\Gamma(Rad(TM))$, $N\in\Gamma(ltr(TM))$ and $W\in\Gamma(S(TM^{\bot}))$, where $(\nabla^{l}_{X}\bar{g})(Y, N) = X\bar{g}(Y, N) - \bar{g}(\nabla_{X}Y, N) - \bar{g}(Y, \nabla^{l}_{X}N)$.
\end{lemma}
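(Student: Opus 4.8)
The plan is to prove each of the four identities by the same mechanism: start from the Codazzi equation \eqref{eq:8} for the ambient statistical structure, which states $(\bar{\nabla}_{X}\bar{g})(Y, Z) = (\bar{\nabla}_{Y}\bar{g})(X, Z)$, expand both sides using the definition of the covariant derivative of the metric together with the appropriate Gauss or Weingarten formula, and then project onto the relevant component of the decomposition $T\bar{M}|_{M} = (Rad(TM) \oplus ltr(TM)) \bot S(TM) \bot S(TM^{\bot})$. The key observation is that $(\bar{\nabla}_{X}\bar{g})(Y, Z) = X\bar{g}(Y,Z) - \bar{g}(\bar{\nabla}_{X}Y, Z) - \bar{g}(Y, \bar{\nabla}_{X}Z)$, and substituting the ambient connection via \eqref{eq:10}--\eqref{eq:12} splits each term into a tangential piece (which reassembles into $(\nabla_X g)(Y,Z)$ or similar induced quantities) plus transversal correction terms involving the various second fundamental forms.

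First I would establish \eqref{eq:16}. I would take $X, Y, Z \in \Gamma(TM)$ and write out $(\bar{\nabla}_{X}\bar{g})(Y, Z)$ using \eqref{eq:10}; the term $\bar{g}(\bar{\nabla}_X Y, Z)$ contributes $\bar{g}(\nabla_X Y, Z) + \bar{g}(h^l(X,Y), Z)$ since $h^s(X,Y) \in \Gamma(S(TM^\bot))$ is $\bar{g}$-orthogonal to $Z \in \Gamma(TM)$. Collecting the tangential contributions into $(\nabla_X g)(Y,Z)$ and isolating the lightlike second fundamental form terms gives $(\bar{\nabla}_X\bar{g})(Y,Z) = (\nabla_X g)(Y,Z) - \bar{g}(h^l(X,Y),Z) - \bar{g}(Y, h^l(X,Z))$. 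Writing the same expression with $X$ and $Y$ interchanged, subtracting, and invoking \eqref{eq:8} to kill the left-hand side, the symmetric terms $\bar{g}(h^l(X,Y),Z)$ and $\bar{g}(h^l(Y,X),Z)$ cancel because $h^l$ is symmetric, leaving exactly \eqref{eq:16}. For \eqref{eq:13} I would repeat the procedure but evaluate the Codazzi identity on the pair $(X, Y)$ with the metric slots filled by $W \in \Gamma(S(TM^\bot))$ and a tangent field, using \eqref{eq:12} so that $\bar{\nabla}_X W = -A_W X + \nabla^s_X W + D^l(X,W)$; the shape-operator terms and the $D^l$ terms are the ones that survive the projection. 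Identities \eqref{eq:15} and \eqref{eq:14} follow the identical template with $\xi \in \Gamma(Rad(TM))$ and $N \in \Gamma(ltr(TM))$ respectively, using \eqref{eq:12b} and \eqref{eq:11}, and in the last case one recognizes the displayed definition $(\nabla^l_X\bar{g})(Y,N) = X\bar{g}(Y,N) - \bar{g}(\nabla_X Y, N) - \bar{g}(Y, \nabla^l_X N)$ as exactly the covariant-derivative expression that emerges.

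The main obstacle, and the step requiring the most care, is bookkeeping the orthogonality relations so that only the intended components survive each projection. One must use repeatedly that $S(TM)$, $S(TM^\bot)$, and the pair $Rad(TM)\oplus ltr(TM)$ are mutually orthogonal, and in particular the normalization $\bar{g}(N_i, \xi_j) = \delta^i_j$ together with $\bar{g}(N_i, N_j) = 0$ from the quasi-orthonormal frame; for instance in \eqref{eq:14} the term $\bar{g}(Y, \bar{\nabla}_X N)$ must be handled so that only $-\bar{g}(Y, A_N X)$ contributes on the tangential side, since $\nabla^l_X N \in \Gamma(ltr(TM))$ pairs trivially with $Y \in \Gamma(TM)$ only after accounting for the radical component of $Y$. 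I expect no conceptual difficulty beyond this careful tracking of which bilinear forms are $\Gamma(ltr(TM))$-valued versus $\Gamma(S(TM^\bot))$-valued, and the antisymmetrization in $X \leftrightarrow Y$ that removes the symmetric second fundamental form in each identity.
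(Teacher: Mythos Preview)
Your proposal is correct and is exactly the approach the paper takes: the paper's proof consists of the single sentence ``By straightforward calculations using (\ref{eq:10}) to (\ref{eq:12}) in (\ref{eq:8}), the Lemma follows,'' and your plan carries out precisely those calculations in detail, expanding $(\bar{\nabla}_{X}\bar{g})(Y,Z)$ via the Gauss--Weingarten decompositions, antisymmetrizing in $X\leftrightarrow Y$, and using the symmetry of $h^{l}$ and $h^{s}$ together with the orthogonality relations of the quasi-orthonormal frame to isolate the surviving terms. Your bookkeeping remarks (e.g.\ that $\bar{g}(h^{l}(X,Y),N)=0$ because $ltr(TM)$ is lightlike, and that the $\bar{g}(Y,\nabla^{l}_{X}N)$ term is absorbed into the displayed definition of $(\nabla^{l}_{X}\bar{g})(Y,N)$ rather than vanishing) are accurate.
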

\begin{proof}
By straightforward calculations using (\ref{eq:10}) to (\ref{eq:12}) in (\ref{eq:8}), the Lemma follows.
\end{proof}
\noindent Analogously using the fact that $(\bar{\nabla}^{*}, \bar{g})$ is also a statistical structure, we have the following lemma immediately.
\begin{lemma}
Let $(M, g)$ be a lightlike submanifold of an indefinite statistical manifold $(\bar{M}, \bar{g}, \bar{\nabla}, \bar{\nabla}^{*})$ then we have
\begin{equation}\label{eq:17}\tag{22*}
(\nabla^{*}_{X}g)(Y, Z) - (\nabla^{*}_{Y}g)(X, Z) = \bar{g}(Y, h^{*l}(X, Z)) - \bar{g}(X, h^{*l}(Y, Z)),
\end{equation}
\begin{equation}\label{eq:13a}\tag{23*}
g(A^{*}_{W}X, Y) - g(X, A^{*}_{W}Y) = \bar{g}(D^{*l}(X, W), Y) - \bar{g}(X, D^{*l}(Y, W)),
\end{equation}
\begin{equation}\label{eq:15d}\tag{24*}
\bar{g}(h^{*l}(X, \xi), Y) - \bar{g}(h^{*l}(Y, \xi), X) = g(X, \nabla^{*}_{Y}\xi) - g(\nabla^{*}_{X}\xi, Y),
\end{equation}
\begin{equation}\label{eq:14a}\tag{25*}
(\nabla^{*l}_{X}\bar{g})(Y, N) - (\nabla^{*l}_{Y}\bar{g})(X, N) = g(A^{*}_{N}Y, X) - g(A^{*}_{N}X, Y),
\end{equation}
for any $X, Y, Z\in\Gamma(TM)$, $\xi\in\Gamma(Rad(TM))$, $N\in\Gamma(ltr(TM))$ and $W\in\Gamma(S(TM^{\bot}))$, where $(\nabla^{*l}_{X}\bar{g})(Y, N) = X\bar{g}(Y, N) - \bar{g}(\nabla^{*}_{X}Y, N) - \bar{g}(Y, \nabla^{*l}_{X}N)$.
\end{lemma}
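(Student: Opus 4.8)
The plan is to exploit the fact, recorded in the discussion following (\ref{eq:9}), that the dual connection carries its own statistical structure: since $(\bar{\nabla}, \bar{g})$ is a statistical structure, so is $(\bar{\nabla}^{*}, \bar{g})$. Consequently the Codazzi equation (\ref{eq:8}) holds verbatim for $\bar{\nabla}^{*}$, that is
\[
(\bar{\nabla}^{*}_{X}\bar{g})(Y, Z) = (\bar{\nabla}^{*}_{Y}\bar{g})(X, Z)
\]
for all $X, Y, Z \in \Gamma(T\bar{M})$. The entire argument of the preceding Lemma then transfers mutatis mutandis: the roles previously played by the Gauss--Weingarten formulae of $\bar{\nabla}$ are now taken by the dual formulae, namely the second equations in (\ref{eq:10}), (\ref{eq:11}) and (\ref{eq:12}), and the dual second fundamental forms $h^{*l}$ and $h^{*s}$ are symmetric exactly as $h^{l}$ and $h^{s}$ are. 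So the whole proof reduces to running the earlier computation with every object starred.

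Concretely, for (22*) I would take $X, Y, Z \in \Gamma(TM)$ and expand $(\bar{\nabla}^{*}_{X}\bar{g})(Y, Z)$ using $\bar{\nabla}^{*}_{X}Y = \nabla^{*}_{X}Y + h^{*l}(X, Y) + h^{*s}(X, Y)$. Since $\bar{g}$ restricts to $g$ on $TM$, since $h^{*s}(X, Y) \in \Gamma(S(TM^{\bot}))$ is $\bar{g}$-orthogonal to $TM$, and since, by the frame relations $\bar{g}(N_{i}, \xi_{j}) = \delta^{i}_{j}$ and $\bar{g}(N_{i}, X_{\alpha}) = 0$, the bundle $ltr(TM)$ pairs nontrivially only with $Rad(TM) \subset TM$, one obtains $(\bar{\nabla}^{*}_{X}\bar{g})(Y, Z) = (\nabla^{*}_{X}g)(Y, Z) - \bar{g}(h^{*l}(X, Y), Z) - \bar{g}(Y, h^{*l}(X, Z))$. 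Antisymmetrizing in $X \leftrightarrow Y$ by the dual Codazzi equation, the term $\bar{g}(h^{*l}(X, Y), Z)$ cancels its $X \leftrightarrow Y$ counterpart by symmetry of $h^{*l}$, and (22*) drops out. For (23*) I would repeat this with the third slot replaced by $W \in \Gamma(S(TM^{\bot}))$ and with the dual Weingarten formula $\bar{\nabla}^{*}_{X}W = -A^{*}_{W}X + \nabla^{*s}_{X}W + D^{*l}(X, W)$: here $\bar{g}(Y, W) = 0$ kills the derivative of the function, the terms $\nabla^{*s}_{X}W$ and $h^{*l}(X, Y)$ vanish by orthogonality, the symmetric contribution $\bar{g}(h^{*s}(X, Y), W)$ cancels under antisymmetrization, and what remains is (23*).

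Equation (25*) is produced the same way with $N \in \Gamma(ltr(TM))$ in the third slot, using $\bar{\nabla}^{*}_{X}N = -A^{*}_{N}X + \nabla^{*l}_{X}N + D^{*s}(X, N)$ and the stated definition of $(\nabla^{*l}_{X}\bar{g})(Y, N)$; the terms $\bar{g}(h^{*l}(X,Y),N)$ and $\bar{g}(h^{*s}(X,Y),N)$ vanish because $ltr(TM)$ is null and orthogonal to $S(TM^{\bot})$, leaving only the shape-operator contribution. Finally (24*) follows by specializing (22*) to $Z = \xi \in \Gamma(Rad(TM)) \subset \Gamma(TM)$ and regrouping, the point being that $\xi \in Rad(TM) \subseteq TM^{\bot}$ forces $\bar{g}(\xi, \cdot)|_{TM} = 0$, so every function-derivative term and the bracket term $g([X, Y], \xi)$ vanish and the identity collapses to the asserted relation between $h^{*l}(\cdot, \xi)$ and $\nabla^{*}_{\cdot}\xi$. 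I do not anticipate a genuine obstacle: the statement is the exact dual of the preceding Lemma, and the only care required is the bookkeeping of which components survive each $\bar{g}$-projection --- in particular retaining, rather than discarding, terms such as $\bar{g}(Y, h^{*l}(X, Z))$ and $\bar{g}(X, D^{*l}(Y, W))$ precisely because $ltr(TM)$ is $\bar{g}$-dual to $Rad(TM)$.
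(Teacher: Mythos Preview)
Your proposal is correct and follows exactly the approach the paper takes: the paper's proof is simply the one-line remark that, since $(\bar{\nabla}^{*}, \bar{g})$ is also a statistical structure, the preceding Lemma's computation (using (\ref{eq:10})--(\ref{eq:12}) in (\ref{eq:8})) goes through verbatim with every object starred. Your write-up supplies the detailed bookkeeping that the paper omits, and the individual derivations you sketch for (22*)--(25*) are all sound.
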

\begin{remark}
The equation (\ref{eq:17}) is consider as the dual of (\ref{eq:16}) and similar assumptions for other equations. Now onwards we omit the dual equation and put $*$ to the number of original equation, if needed.
\end{remark}
\begin{lemma}
Let $(M, g)$ be a lightlike submanifold of an indefinite statistical manifold $(\bar{M}, \bar{g}, \bar{\nabla}, \bar{\nabla}^{*})$ then we have
\begin{equation}\label{eq:15a}
\bar{g}(h^{l}(X, Y), \xi) + \bar{g}(Y, h^{*l}(X, \xi)) + g(Y, \nabla^{*}_{X}\xi) = 0,
\end{equation}
\begin{equation}\label{eq:15b}
\bar{g}(h^{s}(X, Y), W) + \bar{g}(D^{*l}(X, W), Y) = g(A^{*}_{W}X, Y),
\end{equation}
\begin{equation}\label{eq:15c}
\bar{g}(\nabla_{X}Y, N) + \bar{g}(Y, \nabla^{*l}_{X}N) = X\bar{g}(Y, N) + g(Y, A^{*}_{N}X),
\end{equation}
\begin{equation}\label{eq:18}
Xg(Y, Z) = g(\nabla_{X}Y, Z) + g(Y, \nabla^{*}_{X}Z) + \bar{g}(h^{l}(X, Y), Z) + \bar{g}(Y, h^{*l}(X, Z)),
\end{equation}
for any $X, Y, Z\in\Gamma(TM)$, $\xi\in\Gamma(Rad(TM))$, $N\in\Gamma(ltr(TM))$ and $W\in\Gamma(S(TM^{\bot}))$.
\end{lemma}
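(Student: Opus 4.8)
The plan is to derive all four identities uniformly from the duality relation (\ref{eq:9}), $X\bar{g}(Y, Z) = \bar{g}(\bar{\nabla}_{X}Y, Z) + \bar{g}(Y, \bar{\nabla}^{*}_{X}Z)$, by specializing the slot $Z$ to each of the characteristic subbundles of $T\bar{M}|_{M}$ and then expanding $\bar{\nabla}_{X}Y$ and $\bar{\nabla}^{*}_{X}(\cdot)$ through the Gauss--Weingarten formulas (\ref{eq:10})--(\ref{eq:12}). Throughout I would keep the quasi-orthonormal frame relations $\bar{g}(N_{i}, \xi_{j}) = \delta^{i}_{j}$ and $\bar{g}(N_{i}, N_{j}) = 0$ at hand, together with the orthogonality of the decomposition $T\bar{M}|_{M} = (Rad(TM) \oplus ltr(TM)) \bot S(TM) \bot S(TM^{\bot})$, since the entire argument reduces to recording which of the resulting pairings vanish.

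For (\ref{eq:15a}) I would take $Z = \xi \in \Gamma(Rad(TM)) \subset \Gamma(TM)$; because $\xi$ lies in the radical, $\bar{g}(Y, \xi) = 0$, so the left-hand side of (\ref{eq:9}) vanishes identically. Expanding the right-hand side with (\ref{eq:10}), the terms $\bar{g}(\nabla_{X}Y, \xi)$ and $\bar{g}(h^{s}(X, Y), \xi)$ drop out (the first since $\nabla_{X}Y \in \Gamma(TM)$ pairs trivially with the radical, the second since $S(TM^{\bot}) \bot TM$), leaving only $\bar{g}(h^{l}(X, Y), \xi)$; expanding $\bar{\nabla}^{*}_{X}\xi$ by the same Gauss formula and discarding the screen part yields $g(Y, \nabla^{*}_{X}\xi) + \bar{g}(Y, h^{*l}(X, \xi))$, which gives the claimed identity. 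The remaining three equations follow the same template, with $Z = W \in \Gamma(S(TM^{\bot}))$ for (\ref{eq:15b}), $Z = N \in \Gamma(ltr(TM))$ for (\ref{eq:15c}), and generic $Z \in \Gamma(TM)$ for (\ref{eq:18}), using the Weingarten formulas (\ref{eq:11})--(\ref{eq:12}) to expand $\bar{\nabla}^{*}_{X}W$ and $\bar{\nabla}^{*}_{X}N$ where appropriate and moving the shape-operator terms across the equality.

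The only subtle point --- and the step I would watch most carefully --- is the bookkeeping of which pairings survive, because the lightlike transversal bundle behaves differently in different slots. In particular, $\bar{g}(h^{l}(X, Y), N) = 0$ since $ltr(TM)$ is totally isotropic, whereas $\bar{g}(h^{l}(X, Y), Z)$ is \emph{retained} for general $Z \in \Gamma(TM)$ because $ltr(TM)$ pairs nondegenerately with $Rad(TM) \subset TM$ via $\bar{g}(N_{i}, \xi_{j}) = \delta^{i}_{j}$; likewise $\bar{g}(D^{*l}(X, W), Y)$ must be kept in (\ref{eq:15b}) for the same reason. Once these vanishing rules are applied consistently, each identity is simply the decomposition of (\ref{eq:9}) into its surviving components, so no genuine analytic difficulty arises beyond this careful tracking.
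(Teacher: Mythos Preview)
Your proposal is correct and follows precisely the paper's own approach: the paper's proof is the single line ``Lemma follows by using (\ref{eq:10}) to (\ref{eq:12}) in (\ref{eq:9}),'' and you have spelled out exactly that computation, specializing the second slot of (\ref{eq:9}) to $\xi$, $W$, $N$, and a general $Z\in\Gamma(TM)$ in turn and then reading off the surviving pairings from the Gauss--Weingarten decompositions. Your bookkeeping of which terms vanish (in particular that $ltr(TM)$ is totally isotropic but pairs nondegenerately with $Rad(TM)$) is accurate.
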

\begin{proof}
Lemma follows by using (\ref{eq:10}) to (\ref{eq:12}) in (\ref{eq:9}).
\end{proof}
\begin{remark}
Particularly let $X, Y\in\Gamma(S(TM))$ then from (\ref{eq:13}), we have $g(A_{W}X, Y) = g(X, A_{W}Y)$, implies that $A_{W}$ is a self-adjoint operator on $S(TM)$.
\end{remark}
\noindent From (\ref{eq:16}), (\ref{eq:17}) and (\ref{eq:18}), it is obvious that the lightlike submanifolds $(M, g, \nabla)$ and $(M, g, \nabla^{*})$ of an indefinite statistical manifold $(\bar{M}, \bar{g}, \bar{\nabla}, \bar{\nabla}^{*})$ are not lightlike statistical submanifolds. Hence we have the following observation from (\ref{eq:16}), (\ref{eq:17}) and (\ref{eq:18}) immediately.
\begin{theorem}
Let $(\bar{M}, \bar{g}, \bar{\nabla}, \bar{\nabla}^{*})$ be an indefinite statistical manifold. If $h^{l}$ and $h^{*l}$ vanish identically then lightlike submanifolds $(M, g, \nabla)$ and $(M, g, \nabla^{*})$ become lightlike statistical submanifolds of indefinite statistical manifold $\bar{M}$ and $\nabla^{*}$ becomes a dual connection of $\nabla$ with respect to the induced metric $g$.
\end{theorem}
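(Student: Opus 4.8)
The plan is to read off all three conclusions directly from the structural identities already established in the three lemmas above, since imposing $h^{l}\equiv 0$ and $h^{*l}\equiv 0$ collapses the obstruction terms in each of equations (\ref{eq:16}), (\ref{eq:17}) and (\ref{eq:18}). No new computation is needed; the work is entirely a matter of substitution into relations that have already been derived from the Codazzi equation (\ref{eq:8}) and the duality relation (\ref{eq:9}).

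First I would pin down the meaning of the conclusion: to call $(M, g, \nabla)$ a lightlike statistical submanifold means precisely that $(\nabla, g)$ is a statistical structure on $M$, i.e. that $\nabla$ is torsion-free and that the Codazzi-type symmetry $(\nabla_{X}g)(Y,Z) = (\nabla_{Y}g)(X,Z)$ holds for all $X, Y, Z \in \Gamma(TM)$. Torsion-freeness of both $\nabla$ and $\nabla^{*}$ has already been verified in the paragraph just before the lemmas, by equating tangential components in $\bar{\nabla}_{X}Y - \bar{\nabla}_{Y}X - [X,Y] = 0$; so only the Codazzi symmetry remains to be checked for each connection.

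The first statement then follows by putting $h^{l} = 0$ in (\ref{eq:16}): the right-hand side $\bar{g}(Y, h^{l}(X,Z)) - \bar{g}(X, h^{l}(Y,Z))$ vanishes identically, leaving $(\nabla_{X}g)(Y,Z) = (\nabla_{Y}g)(X,Z)$, so $(\nabla, g)$ is a statistical structure and $(M, g, \nabla)$ is a lightlike statistical submanifold. The second statement is the exact mirror image: substituting $h^{*l} = 0$ into the dual identity (\ref{eq:17}) annihilates its right-hand side and yields the Codazzi symmetry for $\nabla^{*}$, giving the lightlike statistical structure $(M, g, \nabla^{*})$. For the duality claim I would invoke (\ref{eq:18}), namely $Xg(Y,Z) = g(\nabla_{X}Y, Z) + g(Y, \nabla^{*}_{X}Z) + \bar{g}(h^{l}(X,Y), Z) + \bar{g}(Y, h^{*l}(X,Z))$; with both lightlike second fundamental forms vanishing, the last two terms disappear and we are left with $Xg(Y,Z) = g(\nabla_{X}Y, Z) + g(Y, \nabla^{*}_{X}Z)$, which is exactly the defining relation (\ref{eq:9}) for $\nabla^{*}$ to be the dual of $\nabla$, now with respect to the induced metric $g$ on $M$.

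Since all three conclusions are pure substitutions into identities proved earlier, I do not expect a genuine analytic obstacle here; indeed the statement is flagged as following \emph{immediately}. The only point that warrants care is the bookkeeping at the interface of definitions: one must confirm that the vanishing of the lightlike second fundamental forms is what isolates the Codazzi symmetry from the full non-metricity expression (\ref{eq:7}), and that the already-established torsion-freeness is what upgrades the symmetric $(0,3)$-tensor condition to a bona fide statistical structure $(\nabla, g)$ (respectively $(\nabla^{*}, g)$) rather than merely a Codazzi condition on the tensor $\nabla g$. Once these definitional checks are in place, the proof consists of three one-line specializations of (\ref{eq:16}), (\ref{eq:17}) and (\ref{eq:18}).
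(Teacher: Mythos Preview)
Your proposal is correct and matches the paper's approach exactly: the paper states that the theorem follows immediately from (\ref{eq:16}), (\ref{eq:17}) and (\ref{eq:18}), and you have spelled out precisely those three substitutions together with the torsion-freeness already recorded before the lemmas. There is nothing to add.
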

\begin{theorem}
Let $(\bar{M}, \bar{g}, \bar{\nabla}, \bar{\nabla}^{*})$ be an indefinite statistical manifold such that $\nabla^{*}$ is a dual connection of $\nabla$ with respect to the induced metric $g$. Then $(M, g, \nabla)$ is a lightlike statistical submanifold of $\bar{M}$ if and only if $(M, g, \nabla^{*})$ is a lightlike statistical submanifold of $\bar{M}$.
\end{theorem}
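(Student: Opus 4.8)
The plan is to reduce the notion of lightlike statistical submanifold to a single Codazzi-type condition and then to show that this condition transfers between $\nabla$ and $\nabla^{*}$. Recall that $(M, g, \nabla)$ is a lightlike statistical submanifold precisely when $\nabla$ is torsion free and $(\nabla_{X}g)(Y, Z) = (\nabla_{Y}g)(X, Z)$ holds for all $X, Y, Z \in \Gamma(TM)$; torsion freeness of both $\nabla$ and $\nabla^{*}$ has already been recorded after (\ref{eq:12b}), so the entire content of the theorem is the equivalence of the two Codazzi equations.

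The crux is to establish the submanifold analogue of (\ref{eq:36}), namely
$$(\nabla_{X}g)(Y, Z) + (\nabla^{*}_{X}g)(Y, Z) = 0$$
for all $X, Y, Z \in \Gamma(TM)$, using only that $\nabla^{*}$ is dual to $\nabla$ with respect to $g$. First I would write the hypothesis as $Xg(Y, Z) = g(\nabla_{X}Y, Z) + g(Y, \nabla^{*}_{X}Z)$. Since $g$ is symmetric, interchanging $Y$ and $Z$ produces the companion relation $Xg(Y, Z) = g(\nabla^{*}_{X}Y, Z) + g(Y, \nabla_{X}Z)$, which is exactly the assertion that $\nabla$ is in turn dual to $\nabla^{*}$; this is the induced counterpart of $(\bar{\nabla}^{*})^{*} = \bar{\nabla}$ and costs nothing beyond symmetry of $g$. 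Expanding $(\nabla_{X}g)(Y, Z) = Xg(Y, Z) - g(\nabla_{X}Y, Z) - g(Y, \nabla_{X}Z)$ with the first relation gives $g(Y, \nabla^{*}_{X}Z) - g(Y, \nabla_{X}Z)$, while expanding $(\nabla^{*}_{X}g)(Y, Z)$ with the companion relation gives $g(Y, \nabla_{X}Z) - g(Y, \nabla^{*}_{X}Z)$; adding the two outcomes cancels everything and yields the displayed identity.

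With this identity available the conclusion is immediate. It asserts $(\nabla_{X}g)(Y, Z) = -(\nabla^{*}_{X}g)(Y, Z)$ as a tensor identity in all three slots, so substituting it (and its version with the first two slots interchanged) into the Codazzi defect of $\nabla$ gives
$$(\nabla_{X}g)(Y, Z) - (\nabla_{Y}g)(X, Z) = -\big[(\nabla^{*}_{X}g)(Y, Z) - (\nabla^{*}_{Y}g)(X, Z)\big].$$
Thus the Codazzi defect of $\nabla$ is the negative of that of $\nabla^{*}$, so one vanishes identically if and only if the other does. Combined with the torsion freeness of both connections this proves that $(M, g, \nabla)$ is a lightlike statistical submanifold if and only if $(M, g, \nabla^{*})$ is.

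The only delicate point I anticipate is the derivation of the additive identity: it uses the duality hypothesis in both of its equivalent forms, which is legitimate solely because $g$ is symmetric, and one must check that the degeneracy of the induced metric does not obstruct these manipulations, which it does not since only bilinearity and symmetry are invoked. As an independent verification I would note that, via (\ref{eq:18}), the duality hypothesis is equivalent to $\bar{g}(h^{l}(X, Y), Z) + \bar{g}(Y, h^{*l}(X, Z)) = 0$; feeding this coupling of $h^{l}$ and $h^{*l}$ into the already proved relations (\ref{eq:16}) and (\ref{eq:17}) recovers the same equivalence of the two Codazzi conditions and cross-checks the signs.
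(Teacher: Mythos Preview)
Your proof is correct but takes a different route from the paper's. The paper works extrinsically: assuming $(M, g, \nabla)$ is statistical, it uses (\ref{eq:16}) to obtain $\bar{g}(Y, h^{l}(X, Z)) = \bar{g}(X, h^{l}(Y, Z))$, then combines the ambient duality (\ref{eq:9}) with the Gauss formula (\ref{eq:10}) to rewrite this as $\bar{g}(Y, h^{*l}(X, Z)) - \bar{g}(X, h^{*l}(Y, Z))$ plus two braced terms that vanish precisely by the induced-duality hypothesis, and then concludes via (\ref{eq:17}). Your argument is intrinsic: you derive the identity $(\nabla_{X}g)(Y, Z) + (\nabla^{*}_{X}g)(Y, Z) = 0$ directly from the duality of $\nabla$ and $\nabla^{*}$ with respect to $g$, without ever invoking the ambient manifold or the second fundamental forms, and read off that the two Codazzi defects are negatives of one another. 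This is cleaner and more general---the same computation shows that for any pair of torsion-free dual connections on a manifold equipped with a symmetric (possibly degenerate) bilinear form, one satisfies the Codazzi equation if and only if the other does. The paper's approach, by contrast, makes explicit the extrinsic mechanism through the coupling of $h^{l}$ and $h^{*l}$, which is essentially what your final cross-check paragraph via (\ref{eq:18}), (\ref{eq:16}) and (\ref{eq:17}) recovers.
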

\begin{proof}
Suppose $(M, g, \nabla)$ is a lightlike statistical submanifold of an indefinite statistical manifold $(\bar{M}, \bar{g}, \bar{\nabla}, \bar{\nabla}^{*})$ then from (\ref{eq:16}), it follows that $\bar{g}(Y, h^{l}(X, Z)) - \bar{g}(X, h^{l}(Y, Z)) = 0,$ for any $X, Y, Z\in\Gamma(TM)$. Further using (\ref{eq:9}) and (\ref{eq:10}), we obtain
\begin{eqnarray*}
0 &=& \bar{g}(Y, h^{*l}(X, Z)) - \bar{g}(X, h^{*l}(Y, Z)) + \{Zg(X, Y) - g(\nabla_{Z}X, Y) - g(X, \nabla^{*}_{Z}Y)\}\nonumber\\&&
- \{Zg(Y, X) - g(\nabla_{Z}Y, X) - g(Y, \nabla^{*}_{Z}X)\}.
\end{eqnarray*}
On using the hypothesis that $\nabla^{*}$ is a dual connection of $\nabla$ with respect to the induced metric $g$ in above expression, it gives $\bar{g}(Y, h^{*l}(X, Z)) - \bar{g}(X, h^{*l}(Y, Z)) = 0$ and hence from (\ref{eq:17}) it follows that $(M, g, \nabla^{*})$ becomes a lightlike statistical submanifold of $\bar{M}$. Similarly we can prove the converse part.
\end{proof}
Recall that a vector field $X$ on a semi-Riemannian manifold $(\bar{M}, \bar{g})$ is said to be a Killing vector field (infinitesimal isometry) if $(L_{X}\bar{g})(Y, Z) = 0$, where
\begin{equation}\label{eq:19}
(L_{X}\bar{g})(Y, Z) = X(\bar{g}(Y, Z)) - \bar{g}([X, Y], Z) - \bar{g}(Y, [X, Z]),
\end{equation}
for any vector fields $Y$ and $Z$ on $\bar{M}$. Further a distribution $\mathcal{D}$ on $\bar{M}$ is said to be a Killing distribution if each vector field belonging to $\mathcal{D}$ is a Killing vector field. Let $X, Y, Z\in\Gamma(TM)$ then using (\ref{eq:10}) in (\ref{eq:19}), we obtain
$$
(L_{X}\bar{g})(Y, Z) = (\nabla_{X}g)(Y, Z) + g(\nabla_{Y}X, Z) + g(Y, \nabla_{Z}X).
$$
Interchange the role of $X$ and $Y$ in above equation and take $Z = \xi$, then on subtracting the resulting equation from it, we derive
$$
(L_{X}\bar{g})(Y, \xi) - (L_{Y}\bar{g})(X, \xi) = g(\nabla_{\xi}X, Y) - g(\nabla_{\xi}Y, X).
$$
\begin{theorem}
Let $(M, g, \nabla)$ be a lightlike statistical submanifold of an indefinite statistical manifold $(\bar{M}, \bar{g}, \bar{\nabla}, \bar{\nabla}^{*})$. The the following statements are equivalent:
\begin{itemize}
  \item[(a)] The radical distribution $Rad(TM)$ is a Killing distribution.
  \item[(b)] The radical distribution $Rad(TM)$ is a parallel distribution with respect to the induced connection $\nabla$, that is, $\nabla_{X}\xi\in\Gamma(Rad(TM))$, for any $X\in\Gamma(TM)$ and $\xi\in\Gamma(Rad(TM))$.
  \item[(c)] $A^{'}_{\xi}$ vanishes of $\Gamma(TM)$ for any $\xi\in\Gamma(Rad(TM))$.
\end{itemize}
\end{theorem}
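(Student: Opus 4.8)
The plan is to prove the chain of equivalences by first disposing of the easy equivalence (b) $\Leftrightarrow$ (c) and then establishing (a) $\Leftrightarrow$ (c) through a single computation of the Lie derivative $L_{\xi}\bar{g}$ restricted to $\Gamma(TM)$. For (b) $\Leftrightarrow$ (c), I would simply read off the decomposition (\ref{eq:12b}): since $\nabla_{X}\xi = -A^{'}_{\xi}X + \nabla^{'t}_{X}\xi$ splits $\nabla_{X}\xi$ into its $S(TM)$-component $-A^{'}_{\xi}X$ and its $Rad(TM)$-component $\nabla^{'t}_{X}\xi$, the vector $\nabla_{X}\xi$ lies in $Rad(TM)$ for every $X$ precisely when $A^{'}_{\xi}X = 0$ for every $X$; this is immediate and uses none of the statistical hypotheses.

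The heart of the argument is the evaluation of $(L_{\xi}\bar{g})(Y,Z)$ for $\xi\in\Gamma(Rad(TM))$ and $Y,Z\in\Gamma(TM)$. I would start from the identity $(L_{X}\bar{g})(Y,Z) = (\nabla_{X}g)(Y,Z) + g(\nabla_{Y}X,Z) + g(Y,\nabla_{Z}X)$ established just before the theorem, set $X=\xi$, and simplify each term using two facts: that radical vectors are $g$-orthogonal to all of $TM$, and the statistical (Codazzi) symmetry. For the two shape-operator terms, the decomposition (\ref{eq:12b}) together with $g(Rad(TM),TM)=0$ gives $g(\nabla_{Y}\xi,Z) = -g(A^{'}_{\xi}Y,Z)$ and $g(Y,\nabla_{Z}\xi) = -g(Y,A^{'}_{\xi}Z)$, since the radical parts $\nabla^{'t}_{Y}\xi$ and $\nabla^{'t}_{Z}\xi$ are annihilated by $g$ against tangential arguments.

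The term that genuinely needs the hypothesis is $(\nabla_{\xi}g)(Y,Z)$. Here I would invoke that $(M,g,\nabla)$ is a lightlike statistical submanifold, so by (\ref{eq:16}) the Codazzi symmetry $(\nabla_{X}g)(Y,Z)=(\nabla_{Y}g)(X,Z)$ holds; applying it with $X=\xi$ moves $\xi$ into a tangential slot, $(\nabla_{\xi}g)(Y,Z) = (\nabla_{Y}g)(\xi,Z)$. Expanding the right side and using $g(\xi,\cdot)=0$ to kill both $Y(g(\xi,Z))$ and $g(\xi,\nabla_{Y}Z)$, what survives is exactly $-g(\nabla_{Y}\xi,Z) = g(A^{'}_{\xi}Y,Z)$. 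Substituting the three simplified terms produces the exact cancellation that yields the clean identity $(L_{\xi}\bar{g})(Y,Z) = -g(Y,A^{'}_{\xi}Z)$.

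Finally I would conclude (a) $\Leftrightarrow$ (c) from this identity: $Rad(TM)$ is a Killing distribution iff $g(Y,A^{'}_{\xi}Z)=0$ for all $Y,Z\in\Gamma(TM)$, and since $A^{'}_{\xi}Z\in\Gamma(S(TM))$ with $g$ non-degenerate on $S(TM)$, taking $Y\in\Gamma(S(TM))$ forces $A^{'}_{\xi}Z=0$ for all $Z$, that is $A^{'}_{\xi}=0$; the converse implication is trivial. Together with (b) $\Leftrightarrow$ (c) this establishes all three equivalences. I expect the main obstacle to be the treatment of $(\nabla_{\xi}g)(Y,Z)$: it is only the Codazzi symmetry of the statistical structure that rewrites this otherwise intractable term as $g(A^{'}_{\xi}Y,Z)$ and thereby drives the cancellation leaving $(L_{\xi}\bar{g})(Y,Z)=-g(Y,A^{'}_{\xi}Z)$; without the lightlike statistical hypothesis this reduction fails and the three conditions need not coincide.
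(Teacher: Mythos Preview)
Your proposal is correct and follows essentially the same approach as the paper: both compute $(L_{\xi}g)(Y,Z)$ starting from the identity $(L_{X}\bar{g})(Y,Z) = (\nabla_{X}g)(Y,Z) + g(\nabla_{Y}X,Z) + g(Y,\nabla_{Z}X)$, use the Codazzi symmetry $(\nabla_{\xi}g)(Y,Z)=(\nabla_{Y}g)(\xi,Z)$ to reduce the first term, and arrive at the key identity $(L_{\xi}g)(Y,Z) = g(\nabla_{Y}\xi,Z) = -g(A^{'}_{\xi}Y,Z)$, from which all three equivalences follow via (\ref{eq:12b}). Your version is marginally more streamlined in that you compute each of the three terms separately, whereas the paper first establishes the auxiliary symmetry $g(\nabla_{X}\xi,Y)=g(\nabla_{Y}\xi,X)$ (its equation (\ref{eq:21}), obtained from (\ref{eq:15}) and (\ref{eq:16})) to combine the last two terms before invoking Codazzi; the outcome and the essential mechanism are identical.
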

\begin{proof}
Let $(M, g, \nabla)$ be a lightlike statistical submanifold of an indefinite statistical manifold $(\bar{M}, \bar{g}, \bar{\nabla}, \bar{\nabla}^{*})$ then
\begin{equation}\label{eq:20}
(\nabla_{X}g)(Y, Z) = (\nabla_{Y}g)(X, Z),
\end{equation}
for any $X, Y, Z\in\Gamma(TM)$. Then from (\ref{eq:16}), we have $\bar{g}(Y, h^{l}(X, Z)) = \bar{g}(X, h^{l}(Y, Z))$ and on taking $Z = \xi$ it follows that $\bar{g}(h^{l}(X, \xi), Y) = \bar{g}(h^{l}(Y, \xi), X)$. Using this result in (\ref{eq:15}), we further obtain
\begin{equation}\label{eq:21}
g(\nabla_{X}\xi, Y) = g(\nabla_{Y}\xi, X),
\end{equation}
for any $X, Y\in\Gamma(TM)$ and $\xi\in\Gamma(Rad(TM))$.\\
\indent Next for any $X, Y\in\Gamma(TM)$ and $\xi\in\Gamma(Rad(TM))$, using (\ref{eq:10}) in (\ref{eq:19}), we derive $(L_{\xi}g)(X, Y) = \{\xi g(X, Y) - g(\nabla_{\xi}X, Y) - g(X, \nabla_{\xi}Y)\} + \{g(\nabla_{X}\xi, Y) + g(\nabla_{Y}\xi, X)\}$ and further using (\ref{eq:20}) and (\ref{eq:21}), it implies that
\begin{eqnarray}\label{eq:22}
&(L_{\xi}g)(X, Y)& = (\nabla_{\xi}g)(X, Y) + 2g(\nabla_{X}\xi, Y)\nonumber\\&&
= (\nabla_{X}g)(\xi, Y) + 2g(\nabla_{X}\xi, Y)\nonumber\\&&
= g(\nabla_{X}\xi, Y).
\end{eqnarray}
On using (\ref{eq:12b}) in (\ref{eq:22}), it gives that
\begin{equation}\label{eq:23}
(L_{\xi}g)(X, Y) = g(\nabla_{X}\xi, Y) = - g(A^{'}_{\xi}X, Y).
\end{equation}
Thus the assertion follows from (\ref{eq:23})
\end{proof}
\begin{theorem}
Let $(M, g, \nabla)$ be a lightlike statistical submanifold of an indefinite statistical manifold $(\bar{M}, \bar{g}, \bar{\nabla}, \bar{\nabla}^{*})$. The the following statements are equivalent:
\begin{itemize}
  \item[(a)] The screen distribution $S(TM)$ is integrable.
  \item[(b)] $A_{N}$ is self adjoint operator on $S(TM)$ with respect to $g$.
  \item[(c)] $h^{'}$ is symmetric on $S(TM)$.
\end{itemize}
\end{theorem}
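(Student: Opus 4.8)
The plan is to prove the chain of equivalences by routing each statement through the symmetry of $h^{'}$ on the screen distribution, which I expect to serve as the central pivot. The statistical hypothesis plays no essential role here; the three conditions are equivalent already at the level of the screen geometry of a lightlike submanifold.

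First, for the equivalence of (a) and (c), I would take $X, Y\in\Gamma(S(TM))$, so that $PX = X$ and $PY = Y$, and apply the screen Gauss formula (\ref{eq:12a}) to write $\nabla_{X}Y = \nabla^{'}_{X}Y + h^{'}(X, Y)$ with $\nabla^{'}_{X}Y\in\Gamma(S(TM))$ and $h^{'}(X, Y)\in\Gamma(Rad(TM))$. Since the induced connection $\nabla$ is torsion free, the bracket $[X, Y] = \nabla_{X}Y - \nabla_{Y}X$ decomposes as $(\nabla^{'}_{X}Y - \nabla^{'}_{Y}X) + (h^{'}(X, Y) - h^{'}(Y, X))$, a sum of an $S(TM)$-component and a $Rad(TM)$-component. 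As $TM = Rad(TM)\bot S(TM)$ is a direct (orthogonal) sum, the bracket $[X, Y]$ lies in $S(TM)$ precisely when its radical part $h^{'}(X, Y) - h^{'}(Y, X)$ vanishes, giving (a) $\Leftrightarrow$ (c) at once.

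Next, for the equivalence of (b) and (c), I would invoke (\ref{eq:6}), which for $X, Y\in\Gamma(S(TM))$ reads $\bar{g}(h^{'}(X, Y), N) = g(A_{N}X, Y)$. The structural fact to exploit is that the pairing of $Rad(TM)$ with $ltr(TM)$ under $\bar{g}$ is non-degenerate, since $\bar{g}(N_{i}, \xi_{j}) = \delta^{i}_{j}$ and $\{N_{i}\}$ is a basis of $\Gamma(ltr(TM))$; hence a radical vector field is completely determined by its $\bar{g}$-products against all $N\in\Gamma(ltr(TM))$. Consequently $h^{'}(X, Y) = h^{'}(Y, X)$ holds if and only if $\bar{g}(h^{'}(X, Y), N) = \bar{g}(h^{'}(Y, X), N)$ for every $N$, which by (\ref{eq:6}) is equivalent to $g(A_{N}X, Y) = g(A_{N}Y, X)$, that is, to $A_{N}$ being self-adjoint on $S(TM)$ with respect to $g$. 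This yields (b) $\Leftrightarrow$ (c), and the three statements are thereby equivalent.

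I do not anticipate a serious obstacle, as both equivalences reduce to bookkeeping of the tangential/transversal decomposition together with the identity (\ref{eq:6}). The single point requiring care is the step where I pass from $\bar{g}(h^{'}(X, Y) - h^{'}(Y, X), N) = 0$ for all $N$ to $h^{'}(X, Y) - h^{'}(Y, X) = 0$: this rests precisely on the non-degeneracy of the radical–lightlike-transversal pairing recorded above, and not on any definiteness of $\bar{g}$, which is where the lightlike setting differs from the classical non-degenerate case.
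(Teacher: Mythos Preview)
Your argument for (a) $\Leftrightarrow$ (c) is fine and coincides with the paper's equation (\ref{eq:25}): torsion-freeness of $\nabla$ together with the decomposition (\ref{eq:12a}) gives $\bar g([X,Y],N)=\bar g(h'(X,Y)-h'(Y,X),N)$, and the non-degenerate pairing between $Rad(TM)$ and $ltr(TM)$ does the rest.

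The gap is in your (b) $\Leftrightarrow$ (c). You invoke (\ref{eq:6}), but that identity is derived in Section~2 for the Levi--Civita connection $\tilde\nabla$, using that $\tilde\nabla$ is \emph{metric}. In Section~3 the same symbols $h'$, $A_N$ are attached to the statistical connection $\bar\nabla$, and $\bar\nabla$ is not metric: only the duality relation (\ref{eq:9}) is available. If you redo the computation with (\ref{eq:9}), $\bar g(Y,N)=0$ for $Y\in\Gamma(S(TM))$ yields
\[
0=X\bar g(Y,N)=\bar g(\bar\nabla_X Y,N)+\bar g(Y,\bar\nabla^{*}_X N)=\bar g(h'(X,Y),N)-g(A^{*}_N X,Y),
\]
so the correct analogue of (\ref{eq:6}) is $\bar g(h'(X,Y),N)=g(A^{*}_N X,Y)$, linking $h'$ to the \emph{dual} shape operator $A^{*}_N$, not to $A_N$. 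Your proposed bridge between (b) and (c) therefore does not stand, and the remark that ``the statistical hypothesis plays no essential role'' is not right.

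What the paper does instead is apply the Codazzi equation (\ref{eq:8}) on $\bar M$ with $Z=N$: expanding $(\bar\nabla_X\bar g)(Y,N)=(\bar\nabla_Y\bar g)(X,N)$ for $X,Y\in\Gamma(S(TM))$ and using torsion-freeness gives
\[
\bar g([X,Y],N)=g(A_N X,Y)-g(X,A_N Y),
\]
which is exactly (\ref{eq:24}). This ties integrability of $S(TM)$ directly to self-adjointness of $A_N$; combined with (\ref{eq:25}) (your (a) $\Leftrightarrow$ (c)) the triangle closes. So the Codazzi condition for $\bar\nabla$ is precisely the ingredient that replaces the missing metric compatibility and produces $A_N$ rather than $A^{*}_N$.
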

\begin{proof}
Let $X, Y\in\Gamma(S(TM))$ and $N\in\Gamma(ltr(TM))$ then from (\ref{eq:8}), we have $(\bar{\nabla}_{X}\bar{g})(Y, N) = (\bar{\nabla}_{Y}\bar{g})(X, N)$, using (\ref{eq:10}) and the fact that $\nabla$ is torsion free, it follows that
\begin{equation}\label{eq:24}
\bar{g}([X, Y], N) = g(A_{N}X, Y) - g(X, A_{N}Y).
\end{equation}
Further, since $\nabla$ is torsion free therefore for any $X, Y\in\Gamma(S(TM))$ and $N\in\Gamma(ltr(TM))$, using (\ref{eq:12a}), we have
\begin{eqnarray}\label{eq:25}
\bar{g}([X, Y], N) = \bar{g}(\nabla_{X}Y, N) - \bar{g}(\nabla_{Y}X, N) = \bar{g}(h^{'}(X, Y) - h^{'}(Y, X), N).
\end{eqnarray}
Hence assertions follows from (\ref{eq:24}) and (\ref{eq:25}).
\end{proof}
\begin{theorem}
Let $(M, g, \nabla)$ be a lightlike statistical submanifold of an indefinite statistical manifold $(\bar{M}, \bar{g}, \bar{\nabla}, \bar{\nabla}^{*}
)$. Then the radical distribution $Rad(TM)$ is always integrable.
\end{theorem}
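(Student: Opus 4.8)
The plan is to verify the Frobenius integrability criterion directly: I must show that $[\xi_1,\xi_2]\in\Gamma(Rad(TM))$ for all $\xi_1,\xi_2\in\Gamma(Rad(TM))$. Since $\xi_1,\xi_2$ are tangent to $M$, their Lie bracket is again tangent, so $[\xi_1,\xi_2]\in\Gamma(TM)$, and by the orthogonal decomposition $TM=Rad(TM)\bot S(TM)$ it suffices to prove that the $S(TM)$-component of $[\xi_1,\xi_2]$ vanishes. Because $S(TM)$ is non-degenerate, this reduces to showing $g([\xi_1,\xi_2],X)=0$ for every $X\in\Gamma(S(TM))$; the $Rad(TM)$-component pairs trivially with radical vectors and so need not be tracked.

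Next I would exploit that the induced connection $\nabla$ is torsion free, so that $[\xi_1,\xi_2]=\nabla_{\xi_1}\xi_2-\nabla_{\xi_2}\xi_1$ and hence $g([\xi_1,\xi_2],X)=g(\nabla_{\xi_1}\xi_2,X)-g(\nabla_{\xi_2}\xi_1,X)$. The crucial input is the symmetry relation (\ref{eq:21}), namely $g(\nabla_{A}\xi,B)=g(\nabla_{B}\xi,A)$, which holds precisely because $M$ is a lightlike statistical submanifold (it was obtained from the statistical condition (\ref{eq:20}) together with (\ref{eq:16}) and (\ref{eq:15})). Applying it to the two terms, with $\xi=\xi_2,\ A=\xi_1,\ B=X$ and with $\xi=\xi_1,\ A=\xi_2,\ B=X$ respectively, gives $g(\nabla_{\xi_1}\xi_2,X)=g(\nabla_{X}\xi_2,\xi_1)$ and $g(\nabla_{\xi_2}\xi_1,X)=g(\nabla_{X}\xi_1,\xi_2)$.

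I would then observe that both right-hand sides vanish: $\nabla_{X}\xi_i\in\Gamma(TM)$, while $\xi_j\in\Gamma(Rad(TM))=\Gamma(TM\cap TM^{\bot})$ is $g$-orthogonal to the whole of $TM$. Consequently $g([\xi_1,\xi_2],X)=0$ for all $X\in\Gamma(S(TM))$, so the screen component of the bracket is zero and $[\xi_1,\xi_2]\in\Gamma(Rad(TM))$, which establishes integrability.

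The main obstacle, and the reason the statistical hypothesis is indispensable, lies in the reduction of the previous paragraph: for a general lightlike submanifold the screen component of $[\xi_1,\xi_2]$ need not vanish, and a direct computation through the Levi-Civita connection $\bar{\nabla}^{\bar g}$ entangles the bracket with the second fundamental forms. The decisive simplification is to recognize that the statistical structure furnishes the symmetry (\ref{eq:21}), after which the null (radical) character of $\xi_1,\xi_2$ collapses both terms to zero. Identifying this route is the only nontrivial point; the remaining steps are routine.
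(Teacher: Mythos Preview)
Your argument is correct and shares the paper's overall plan: reduce integrability to $g([\xi_1,\xi_2],X)=0$ for $X\in\Gamma(S(TM))$, use that $\nabla$ is torsion free, and invoke the statistical hypothesis. The difference is in how the last step is executed. You pass through the symmetry relation (\ref{eq:21}) (borrowed from the proof of the preceding theorem) to show that each of $g(\nabla_{\xi_1}\xi_2,X)$ and $g(\nabla_{\xi_2}\xi_1,X)$ vanishes individually, since the right-hand sides pair a tangent vector with a radical one. The paper instead applies the Codazzi identity (\ref{eq:20}) directly with the triple $(\xi_1,\xi_2,X)$: expanding $(\nabla_{\xi_i}g)(\xi_j,X)$ and using that $\xi_j$ is radical kills every term except $-g(\nabla_{\xi_i}\xi_j,X)$, so (\ref{eq:20}) immediately gives $g(\nabla_{\xi_1}\xi_2,X)=g(\nabla_{\xi_2}\xi_1,X)$ and hence the bracket has no screen part. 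The paper's route is shorter and self-contained (it does not rely on (\ref{eq:21}), which was an auxiliary identity inside another proof), while your route yields the marginally stronger observation that both summands vanish rather than merely coincide.
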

\begin{proof}
Let $(M, g, \nabla)$ be a lightlike statistical submanifold of an indefinite statistical manifold $(\bar{M}, \bar{g}, \bar{\nabla}, \bar{\nabla}^{*})$ then
for any $\xi_{1}, \xi_{2}\in\Gamma(Rad(TM))$ and $X\in\Gamma(S(TM))$ from (\ref{eq:20}), we obtain $(\nabla_{\xi_{1}}g)(\xi_{2}, X) = (\nabla_{\xi_{2}}g)(\xi_{1}, X)$. This implies that $g(\nabla_{\xi_{1}}\xi_{2}, X) = g(\nabla_{\xi_{2}}\xi_{1}, X)$, that is, $g([\xi_{1}, \xi_{2}], X) = 0$ and hence the proof is complete.
\end{proof}
\indent Let $M$ be a lightlike submanifold of an indefinite statistical manifold $(\bar{M}, \bar{g}, \bar{\nabla}, \bar{\nabla}^{*})$. Let $\bar{R}$, $R$, $R^{l}$ and $R^{s}$ be the curvature tensor fields of $\bar{\nabla}$, $\nabla$, $\nabla^{l}$ and $\nabla^{s}$ respectively. Then using (\ref{eq:10}) to (\ref{eq:12}) for any $X, Y, Z\in\Gamma(TM)$, $N\in\Gamma(ltr(TM))$ and $W\in\Gamma(S(TM^{\bot}))$, we have the following observation (see also \cite{1}):
\begin{theorem}\label{thm:A}
Let $M$ be a lightlike submanifold of an indefinite statistical manifold $(\bar{M}, \bar{g}, \bar{\nabla}, \bar{\nabla}^{*})$ then following equations of Gauss, Codazzi and Ricci hold
\begin{eqnarray}\label{eq:26}
(\bar{R}(X, Y)Z)^{tangential} &=& R(X, Y)Z + A_{h^{l}(X, Z)}Y - A_{h^{l}(Y, Z)}X + A_{h^{s}(X, Z)}Y\nonumber\\&& - A_{h^{s}(Y, Z)}X,
\end{eqnarray}
\begin{eqnarray}\label{eq:26a}
(\bar{R}(X, Y)Z)^{transversal} &=& (\nabla_{X}h^{l})(Y, Z) - (\nabla_{Y}h^{l})(X, Z) + (\nabla_{X}h^{s})(Y, Z)\nonumber\\&&
- (\nabla_{Y}h^{s})(X, Z) + D^{l}(X, h^{s}(Y, Z)) - D^{l}(Y, h^{s}(X, Z))\nonumber\\&&
+ D^{s}(X, h^{l}(Y, Z)) - D^{s}(Y, h^{l}(X, Z)),
\end{eqnarray}
\begin{eqnarray}\label{eq:27}
(\bar{R}(X, Y)N)^{tangential} &=& (\nabla_{Y}A)(N, X) - (\nabla_{X}A)(N, Y) + A_{D^{s}(X, N)}Y \nonumber\\&&
- A_{D^{s}(Y, N)}X,
\end{eqnarray}
\begin{eqnarray}\label{eq:27a}
(\bar{R}(X, Y)N)^{transversal} &=& R^{l}(X, Y)N + h^{l}(Y, A_{N}X) -  h^{l}(X, A_{N}Y) + h^{s}(Y, A_{N}X)\nonumber\\&&
-  h^{s}(X, A_{N}Y) + (\nabla_{X}D^{s})(Y, N) - (\nabla_{Y}D^{s})(X, N)\nonumber\\&&
+ D^{l}(X, D^{s}(Y, N)) - D^{l}(Y, D^{s}(X, N)),
\end{eqnarray}
\begin{eqnarray}\label{eq:28}
(\bar{R}(X, Y)W)^{tangential} &=& (\nabla_{Y}A)(W, X) - (\nabla_{X}A)(W, Y) + A_{D^{l}(X, W)}Y\nonumber\\&&
- A_{D^{l}(Y, W)}X,
\end{eqnarray}
\begin{eqnarray}\label{eq:28a}
(\bar{R}(X, Y)W)^{transversal} &=& R^{s}(X, Y)W + h^{l}(Y, A_{W}X) -  h^{l}(X, A_{W}Y) + h^{s}(Y, A_{W}X)\nonumber\\&&
-  h^{s}(X, A_{W}Y) + (\nabla_{X}D^{l})(Y, W) - (\nabla_{Y}D^{l})(X, W)\nonumber\\&&
+ D^{s}(X, D^{l}(Y, W)) - D^{s}(Y, D^{l}(X, W)),
\end{eqnarray}
where
\begin{eqnarray}
&&(\nabla_{X}h^{l})(Y, Z) = \nabla^{l}_{X}(h^{l}(Y, Z)) - h^{l}(\nabla_{X}Y, Z) - h^{l}(Y, \nabla_{X}Z),\nonumber\\&&
(\nabla_{X}h^{s})(Y, Z) = \nabla^{s}_{X}(h^{s}(Y, Z)) - h^{s}(\nabla_{X}Y, Z) - h^{s}(Y, \nabla_{X}Z),\nonumber\\&&
(\nabla_{X}A)(N, Y) = \nabla_{X}(A(N, Y)) - A(\nabla^{l}_{X}N, Y) - A(N, \nabla_{X}Y),\nonumber\\&&
(\nabla_{X}D^{l})(Y, W) = \nabla^{l}_{X}(D^{l}(Y, W)) - D^{l}(\nabla_{X}Y, W) - D^{l}(Y, \nabla^{s}_{X}W),\nonumber\\&&
(\nabla_{X}A)(W, Y) = \nabla_{X}(A(W, Y)) - A(\nabla^{s}_{X}W, Y) - A(W, \nabla_{X}Y),\nonumber\\&&
(\nabla_{X}D^{s})(Y, N) = \nabla^{s}_{X}(D^{s}(Y, N)) - D^{s}(\nabla_{X}Y, N) - D^{s}(Y, \nabla^{l}_{X}N).\nonumber
\end{eqnarray}
\end{theorem}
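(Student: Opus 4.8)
The plan is to derive all six identities directly from the definition of the ambient curvature tensor
$$\bar{R}(X,Y)Z = \bar{\nabla}_X\bar{\nabla}_Y Z - \bar{\nabla}_Y\bar{\nabla}_X Z - \bar{\nabla}_{[X,Y]}Z,$$
by substituting the Gauss--Weingarten formulas (\ref{eq:10})--(\ref{eq:12}) and then projecting onto the tangential and transversal parts of $T\bar{M}\mid_M$. The crucial observation is that none of these decomposition identities uses metric compatibility of $\bar{\nabla}$; they rely only on the linearity of $\bar{\nabla}$ and on the direct-sum splitting $T\bar{M}\mid_M = TM \oplus tr(TM)$. Hence the computation is formally identical to the semi-Riemannian case treated in \cite{1}, and the statistical (non-metric) nature of $\bar{\nabla}$ plays no role in establishing these particular equations; that feature would only intervene in the symmetry properties of the curvature, not in its component decomposition.

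For the Gauss equations (\ref{eq:26}) and (\ref{eq:26a}) I would take $Z\in\Gamma(TM)$ and first expand $\bar{\nabla}_Y Z$ by (\ref{eq:10}) into its tangential part $\nabla_Y Z$ and its $ltr(TM)$- and $S(TM^{\bot})$-valued parts $h^{l}(Y,Z)$ and $h^{s}(Y,Z)$. Applying $\bar{\nabla}_X$ to each summand then requires (\ref{eq:10}) on $\nabla_Y Z$, (\ref{eq:11}) on $h^{l}(Y,Z)$ since it lies in $ltr(TM)$, and (\ref{eq:12}) on $h^{s}(Y,Z)$ since it lies in $S(TM^{\bot})$. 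Repeating with $X$ and $Y$ interchanged, expanding $\bar{\nabla}_{[X,Y]}Z$ by (\ref{eq:10}), and subtracting, I would collect the $TM$-valued terms to obtain (\ref{eq:26}) and the $tr(TM)$-valued terms to obtain (\ref{eq:26a}), recognising the combinations $\nabla^{l}_{X}(h^{l}(Y,Z)) - h^{l}(\nabla_X Y, Z) - h^{l}(Y,\nabla_X Z)$, and similarly for $h^{s}$, as the tensorial derivatives $(\nabla_X h^{l})(Y,Z)$ and $(\nabla_X h^{s})(Y,Z)$ defined at the end of the statement.

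The Codazzi and Ricci equations (\ref{eq:27})--(\ref{eq:28a}) follow the same template, but start from $\bar{R}(X,Y)N$ with $N\in\Gamma(ltr(TM))$ and from $\bar{R}(X,Y)W$ with $W\in\Gamma(S(TM^{\bot}))$, now expanding the first derivative by (\ref{eq:11}) and (\ref{eq:12}) respectively and again separating the two component types. The main obstacle here is purely organisational rather than conceptual: one must track a large number of terms, each landing in a different subbundle, and must invoke the definitions of the induced derivatives $(\nabla_X A)(N,Y)$, $(\nabla_X A)(W,Y)$, $(\nabla_X D^{s})(Y,N)$ and $(\nabla_X D^{l})(Y,W)$ supplied in the statement so that the shape-operator and transversal-derivative terms recombine correctly. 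Since the splitting $tr(TM) = ltr(TM)\bot S(TM^{\bot})$ is $\bar{g}$-orthogonal, the projections onto $ltr(TM)$ and $S(TM^{\bot})$ are unambiguous, and once the bookkeeping is carried out each identity is read off by equating like components; no further argument is required.
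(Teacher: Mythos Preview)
Your proposal is correct and matches the paper's approach: the paper does not give a detailed proof but simply says the theorem follows by using (\ref{eq:10})--(\ref{eq:12}) and refers to \cite{1} for the analogous semi-Riemannian computation. Your explicit observation that the derivation uses only the linearity of $\bar{\nabla}$ and the splitting $T\bar{M}\mid_M = TM\oplus tr(TM)$, and not the metric compatibility or Codazzi condition of $\bar{\nabla}$, is exactly the point underlying the paper's reference to \cite{1}.
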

Recall  that a submanifold $(M, g)$ of a Riemannian manifold $(\bar{M}, \bar{g})$ is called \emph{totally geodesic} if any geodesic on the submanifold $M$ with its induced Riemannian metric $g$ is also a geodesic on the Riemannian manifold $(\bar{M}, \bar{g})$ and moreover $M$ is totally geodesic in $\bar{M}$ if and only if the second fundamental form on $M$ vanishes identically. Therefore Duggal and Jin \cite{kldj} defined that a lightlike submanifold $M$ of a semi-Riemannian manifold $(\bar{M}, \bar{g})$ with the Levi-civita connection $\bar{\nabla}$ is a totally geodesic lightlike submanifold if $h^{l}(X, Y) = h^{s}(X, Y) = 0$, for all $X, Y\in\Gamma(TM)$. Now, let $(\bar{M}, \bar{g}, \bar{\nabla}, \bar{\nabla}^{*})$ be an indefinite statistical manifold with affine connections $\bar{\nabla}$ and $\bar{\nabla}^{*}$ then a lightlike submanifold $M$ of $\bar{M}$ is said to be $\bar{\nabla}$-autoparallel (respectively, $\bar{\nabla}^{*}$-autoparallel) if $(\bar{\nabla}_{X}Y)_{p} = ((\bar{\nabla}|_{M})_{X}Y)_{p}$ (respectively, $(\bar{\nabla}^{*}_{X}Y)_{p} = ((\bar{\nabla}^{*}|_{M})_{X}Y)_{p}$, for any $p\in M$ and $X, Y\in\Gamma(TM)$, that is, if $h^{l}(X, Y) = h^{s}(X, Y) = 0$ (respectively, $h^{*l}(X, Y) = h^{*s}(X, Y) = 0$), for all $X, Y\in\Gamma(TM)$. The submanifold $M$ is called dual-autoparallel if $M$ is both $\bar{\nabla}$- and $\bar{\nabla}^{*}$-autoparallel, that is, if $h^{l}(X, Y) = h^{*l}(X, Y) = 0$ and $h^{s}(X, Y) = h^{*s}(X, Y) = 0$, for all $X, Y\in\Gamma(TM)$, see \cite{cu}.
\begin{remark}
Let $M$ be $\bar{\nabla}$-autoparallel (respectively, $\bar{\nabla}^{*}$-autoparallel) lightlike submanifold of an indefinite statistical $(\bar{M}, \bar{g}, \bar{\nabla}, \bar{\nabla}^{*})$ then using (\ref{eq:16}) (respectively, (\ref{eq:17})), $(M, \nabla, g)$ (respectively, $(M, \nabla^{*}, g)$) becomes a lightlike statistical submanifold of $\bar{M}$. Further, if $M$ is dual-autoparallel then using (\ref{eq:16}), (\ref{eq:17}) and (\ref{eq:18}), both $(M, \nabla, g)$ and $(M, \nabla^{*}, g)$ become lightlike statistical submanifolds of $\bar{M}$ and $\nabla^{*}$ becomes a dual connection of $\nabla$ with respect to the induced metric $g$.
\end{remark}
\begin{theorem}\label{thm:B}
Let $M$ be $\bar{\nabla}$-autoparallel (respectively, $\bar{\nabla}^{*}$-autoparallel) lightlike submanifold of an indefinite statistical $(\bar{M}, \bar{g}, \bar{\nabla}, \bar{\nabla}^{*})$ then $\bar{R}(X, Y)Z = R(X, Y)Z$, (respectively, $\bar{R}^{*}(X, Y)Z = R^{*}(X, Y)Z)$, for any $X, Y\in\Gamma(TM)$.
\end{theorem}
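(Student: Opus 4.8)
The plan is to use the autoparallel hypothesis to collapse the Gauss formula and then read off the two curvature tensors directly from their definitions. Assume first that $M$ is $\bar{\nabla}$-autoparallel, so by definition $h^{l}(X,Y)=h^{s}(X,Y)=0$ for all $X,Y\in\Gamma(TM)$. The Gauss formula in (\ref{eq:10}) then reduces to $\bar{\nabla}_{X}Y=\nabla_{X}Y$ for all tangent $X,Y$. The crucial point is that the right-hand side lies in $\Gamma(TM)$, so this collapsed identity can be iterated.

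First I would expand $\bar{R}(X,Y)Z=\bar{\nabla}_{X}\bar{\nabla}_{Y}Z-\bar{\nabla}_{Y}\bar{\nabla}_{X}Z-\bar{\nabla}_{[X,Y]}Z$ for $X,Y,Z\in\Gamma(TM)$. Since $\bar{\nabla}_{Y}Z=\nabla_{Y}Z\in\Gamma(TM)$, a second application of the collapsed Gauss formula gives $\bar{\nabla}_{X}\bar{\nabla}_{Y}Z=\bar{\nabla}_{X}(\nabla_{Y}Z)=\nabla_{X}\nabla_{Y}Z$; the analogous reduction holds for $\bar{\nabla}_{Y}\bar{\nabla}_{X}Z$, and $\bar{\nabla}_{[X,Y]}Z=\nabla_{[X,Y]}Z$ because $[X,Y]\in\Gamma(TM)$. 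Substituting these three equalities yields $\bar{R}(X,Y)Z=\nabla_{X}\nabla_{Y}Z-\nabla_{Y}\nabla_{X}Z-\nabla_{[X,Y]}Z=R(X,Y)Z$, as required.

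Alternatively, one can deduce the same conclusion directly from Theorem \ref{thm:A}: setting $h^{l}=h^{s}=0$ annihilates every shape-operator term in the tangential Gauss equation (\ref{eq:26}), leaving $(\bar{R}(X,Y)Z)^{tangential}=R(X,Y)Z$, while each summand of the transversal equation (\ref{eq:26a}) carries a factor of $h^{l}$, $h^{s}$, or a covariant derivative thereof and hence vanishes, giving $(\bar{R}(X,Y)Z)^{transversal}=0$; adding the two components reproduces $\bar{R}(X,Y)Z=R(X,Y)Z$. The $\bar{\nabla}^{*}$-autoparallel case is entirely parallel: by definition $h^{*l}=h^{*s}=0$, so the starred Gauss formula in (\ref{eq:10}) collapses to $\bar{\nabla}^{*}_{X}Y=\nabla^{*}_{X}Y$, and the identical argument gives $\bar{R}^{*}(X,Y)Z=R^{*}(X,Y)Z$. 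There is no genuine obstacle here; the only point demanding care is the iteration of the collapsed Gauss formula, which rests on the observation that autoparallelism forces $\bar{\nabla}_{X}Y$ to remain tangent to $M$, so that intermediate covariant derivatives stay in $\Gamma(TM)$ and the reduction can be applied twice.
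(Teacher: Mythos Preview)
Your proposal is correct. The paper states Theorem~\ref{thm:B} without proof, treating it as an immediate consequence of Theorem~\ref{thm:A}; your second approach (substituting $h^{l}=h^{s}=0$ into (\ref{eq:26}) and (\ref{eq:26a})) is exactly this, and your first approach via direct iteration of the collapsed Gauss formula is an equally valid, slightly more self-contained alternative.
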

\noindent Using (\ref{eq:9}) for $X, Y, Z, U\in\Gamma(TM)$, we get $\bar{g}(\bar{\nabla}_{X}h^{s}(Y, Z), U) = - \bar{g}(h^{s}(Y, Z), h^{*s}(X, U))$ and $\bar{g}(\bar{\nabla}^{*}_{Y}h^{*s}(X, U), Z) = - \bar{g}(h^{s}(Y, Z), h^{*s}(X, U))$ therefore it implies that
\begin{equation}\label{eq:29}
\bar{g}(\bar{\nabla}_{X}h^{s}(Y, Z), U) = \bar{g}(\bar{\nabla}^{*}_{Y}h^{*s}(X, U), Z).
\end{equation}
Then from (\ref{eq:29}), we have the following observation.
\begin{lemma}
Let $(M, g)$ be a lightlike submanifold of an indefinite statistical manifold $(\bar{M}, \bar{g}, \bar{\nabla}, \bar{\nabla}^{*})$. If $M$ is $\bar{\nabla}$-autoparallel then $h^{*s}$ is parallel with respect to $\bar{\nabla}^{*}$ and if $M$ is $\bar{\nabla}^{*}$-autoparallel then $h^{s}$ is parallel with respect to $\bar{\nabla}$.
\end{lemma}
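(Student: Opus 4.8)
The plan is to read the conclusion directly off equation (\ref{eq:29}), which already packages the interaction between the two screen second fundamental forms and the dual connections, so no new computation is really needed. First I would recall that, by definition, $M$ being $\bar{\nabla}$-autoparallel means $h^{l}(X,Y)=h^{s}(X,Y)=0$ for all $X,Y\in\Gamma(TM)$; in particular $h^{s}$ is the zero section of $S(TM^{\bot})$ along $M$. Consequently $\bar{\nabla}_{X}\bigl(h^{s}(Y,Z)\bigr)=\bar{\nabla}_{X}0=0$, so the left-hand side of (\ref{eq:29}) vanishes identically.

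Substituting this into (\ref{eq:29}) yields $\bar{g}\bigl(\bar{\nabla}^{*}_{Y}h^{*s}(X,U),Z\bigr)=0$ for all $X,Y,Z,U\in\Gamma(TM)$. The next step is to convert this family of scalar identities into the assertion that $h^{*s}$ is parallel with respect to $\bar{\nabla}^{*}$. Here I would invoke the frame relations $\bar{g}(N_{i},\xi_{j})=\delta^{i}_{j}$ together with the non-degeneracy of $S(TM)$: letting $Z$ range over $\Gamma(S(TM))$ detects the $S(TM)$-component of $\bar{\nabla}^{*}_{Y}h^{*s}(X,U)$, while letting $Z$ range over $\Gamma(Rad(TM))$ detects its $ltr(TM)$-component through the duality pairing with the lightlike transversal frame. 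Vanishing of the pairing against every $Z\in\Gamma(TM)$ therefore forces these components of $\bar{\nabla}^{*}_{Y}h^{*s}(X,U)$ to be zero, which is the precise sense in which $h^{*s}$ is $\bar{\nabla}^{*}$-parallel.

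For the second assertion I would run the same argument with the roles of the two connections interchanged. Assuming instead that $M$ is $\bar{\nabla}^{*}$-autoparallel gives $h^{*s}=0$, hence $\bar{\nabla}^{*}_{Y}\bigl(h^{*s}(X,U)\bigr)=0$ and the right-hand side of (\ref{eq:29}) vanishes; reading the identity the other way, $\bar{g}\bigl(\bar{\nabla}_{X}h^{s}(Y,Z),U\bigr)=0$ for all tangent fields, and the same frame and non-degeneracy argument shows $h^{s}$ is parallel with respect to $\bar{\nabla}$. Since the computational content is a single substitution, I expect no analytic difficulty; the one point requiring care is the bookkeeping in the final step, namely making explicit that the degenerate metric still separates the $S(TM)$ and $ltr(TM)$ components once we test against all of $TM$, and noting that (\ref{eq:29}) controls only the $\bar{g}$-pairing with tangent vectors and not the $S(TM^{\bot})$-component, so the parallelism must be understood in exactly this tangential sense.
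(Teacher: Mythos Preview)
Your approach is essentially the paper's own: the lemma is stated immediately after (\ref{eq:29}) with the one-line justification ``Then from (\ref{eq:29}), we have the following observation,'' and you have simply spelled out the substitution $h^{s}=0$ (respectively $h^{*s}=0$) that the paper leaves implicit. Your added care about which components of $\bar{\nabla}^{*}_{Y}h^{*s}(X,U)$ are actually detected by pairing against $Z\in\Gamma(TM)$, and your remark that the parallelism must be read in this tangential sense, go beyond what the paper provides; note however that besides the $S(TM^{\bot})$-component, the $Rad(TM)$-component is also not controlled by such pairings.
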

\begin{definition}
A statistical structure $(\bar{\nabla}, \bar{g})$ is said to be of constant curvature $k\in\mathbb{R}$ if $\bar{R}(X, Y)Z = k\{\bar{g}(Y, Z)X - \bar{g}(X, Z)Y\}$ holds for any vector fields $X$, $Y$ and $Z$ on $\bar{M}$. A statistical structure $(\bar{\nabla}, \bar{g})$ of constant curvature $0$ is called a Hessian structure.
\end{definition}
\noindent Let $K\in\Gamma(T\bar{M}^{(1, 2)})$ be the difference tensor field for a statistical structure $(\bar{\nabla}, \bar{g})$ then $Q = - \bar{\nabla}K\in\Gamma(T\bar{M}^{(1, 3)})$ is called the Hessian curvature tensor \cite{shima}. If for $c\in\mathbb{R}$, $(\bar{\nabla}_{X}K)(Y, Z) = - \frac{c}{2}\{\bar{g}(X, Y)Z + \bar{g}(X, Z)Y\}$, for any $X, Y, Z\in\Gamma(T\bar{M})$ then the Hessian structure is said to be of constant Hessian curvature $c$.
\begin{example}\cite{hf}
Let $(H, \tilde{g})$ be the upper half space of constant curvature $- 1$ then
$$
H = \{y = (y^{1},\ldots, y^{n+1})\in\mathbb{R}^{n+1}| y^{n+1} > 0\},
$$
$$
\tilde{g} = (y^{n+1})^{- 2}\sum_{A=1}^{n+1}dy^{A}dy^{A}.
$$
If an affine connection $\tilde{\nabla}$ on $H$ given by
$$
\tilde{\nabla}_{\frac{\partial}{\partial y^{n+1}}}\frac{\partial}{\partial y^{n+1}} = (y^{n+1})^{- 1}\frac{\partial}{\partial y^{n+1}},\quad
\tilde{\nabla}_{\frac{\partial}{\partial y^{i}}}\frac{\partial}{\partial y^{j}} = 2\delta_{ij}(y^{n+1})^{-1}\frac{\partial}{\partial y^{n+1}},
$$
$$
\tilde{\nabla}_{\frac{\partial}{\partial y^{i}}}\frac{\partial}{\partial y^{n+1}} = \tilde{\nabla}_{\frac{\partial}{\partial y^{n+1}}}\frac{\partial}{\partial y^{j}} = 0,
$$
where $i, j=1,\ldots, n$. Then $(H, \tilde{\nabla}, \tilde{g})$ becomes a Hessian manifold of constant Hessian curvature $4$. Moreover $(H, \tilde{\nabla}, \tilde{g})$ expresses the statistical model of normal distribution when dim$H$ = 2 and the normal distribution with mean $\mu$ and variance $\sigma^{2}$ is given by
$$
N(x, \mu, \sigma^{2}) = \frac{1}{\sqrt{2\pi\sigma^{2}}}\exp\Big\{- \frac{1}{2\sigma^{2}}(x - \mu)^{2}\Big\}, x,\mu\in\mathbb{R}, \sigma > 0.
$$
\end{example}
\begin{definition}(\cite{hf1}) For a statistical manifold $(\bar{M}, \bar{\nabla}, \bar{g})$, the statistical curvature tensor field $\bar{S}\in\Gamma(T\bar{M}^{(1, 3)})$ of $(\bar{M}, \bar{\nabla}, \bar{g})$ is given by
\begin{equation}\label{eq:37}
\bar{S}(X, Y)Z = \frac{1}{2}\{\bar{R}(X, Y)Z + \bar{R}^{*}(X, Y)Z\},
\end{equation}
for any $X, Y, Z\in\Gamma(T\bar{M})$. Further a statistical manifold $(\bar{M}, \bar{\nabla}, \bar{g})$ is said to be of constant sectional curvature $c\in\mathbb{R}$ if
\begin{equation}\label{eq:42}
\bar{S}(X, Y)Z = c\{\bar{g}(Y, Z)X - \bar{g}(X, Z)Y\},
\end{equation}
holds for any $X, Y, Z\in\Gamma(T\bar{M})$.
\end{definition}
\noindent The statistical curvature tensor field $S$ satisfies the following identities
\begin{equation}\label{eq:38}
\bar{g}(\bar{S}(U, Z)Y, X) = - \bar{g}(\bar{S}(Z, U)Y, X),\quad \bar{g}(\bar{S}(U, Z)Y, X) = - \bar{g}(\bar{S}(U, Z)X, Y),
\end{equation}
\begin{equation}\label{eq:39}
\bar{g}(\bar{S}(X, Y)Z, U) = \bar{g}(\bar{S}(U, Z)Y, X),\quad \bar{S}(X, Y)Z + \bar{S}(Y, Z)X + \bar{S}(Z, X)Y = 0,
\end{equation}
for any $X, Y, Z, U\in\Gamma(T\bar{M})$.\\
\indent Let $S$ be the induced statistical curvature tensor field induced on lightlike submanifold $M$ of an indefinite statistical manifold $(\bar{M}, \bar{g}, \bar{\nabla}, \bar{\nabla}^{*})$ and given by $S(X, Y)Z = \frac{1}{2}\{R(X, Y)Z + R^{*}(X, Y)Z\}$, for any $X, Y Z\in\Gamma(TM)$. Then using the expressions of the Theorem \ref{thm:A} and their dual, we have the following expressions for the statistical curvature tensor field on $\bar{M}$:
\begin{eqnarray}\label{eq:40}
2(\bar{S}(X, Y)Z)^{tangential} &=& 2S(X, Y)Z + A_{h^{l}(X, Z)}Y - A_{h^{l}(Y, Z)}X + A_{h^{s}(X, Z)}Y\nonumber\\&&
- A_{h^{s}(Y, Z)}X + A^{*}_{h^{*l}(X, Z)}Y - A^{*}_{h^{*l}(Y, Z)}X + A^{*}_{h^{*s}(X, Z)}Y\nonumber\\&&
- A^{*}_{h^{*s}(Y, Z)}X,
\end{eqnarray}
\begin{eqnarray}\label{eq:41}
2(\bar{S}(X, Y)Z)^{transversal} &=& (\nabla_{X}h^{l})(Y, Z) - (\nabla_{Y}h^{l})(X, Z) + (\nabla_{X}h^{s})(Y, Z)\nonumber\\&&
- (\nabla_{Y}h^{s})(X, Z) + D^{l}(X, h^{s}(Y, Z)) - D^{l}(Y, h^{s}(X, Z))\nonumber\\&&
+ D^{s}(X, h^{l}(Y, Z)) - D^{s}(Y, h^{l}(X, Z)) + (\nabla^{*}_{X}h^{*l})(Y, Z)\nonumber\\&&
- (\nabla^{*}_{Y}h^{*l})(X, Z) + (\nabla^{*}_{X}h^{*s})(Y, Z) - (\nabla^{*}_{Y}h^{*s})(X, Z)\nonumber\\&&
+ D^{*l}(X, h^{*s}(Y, Z)) - D^{*l}(Y, h^{*s}(X, Z))\nonumber\\&&
+ D^{*s}(X, h^{*l}(Y, Z)) - D^{*s}(Y, h^{*l}(X, Z)),
\end{eqnarray}
for $X, Y, Z\in\Gamma(TM)$ and analogously we can write expressions for $(\bar{S}(X, Y)N)^{tangential}$, $(\bar{S}(X, Y)N)^{transversal}$, $(\bar{S}(X, Y)W)^{tangential}$ and $(\bar{S}(X, Y)W)^{transversal}$.\\
\indent Next, consider the frames field $\{\xi_{1},...,\xi_{r}, e_{r+1},..., e_{m}, N_{1},..., N_{r}, W_{r+1},..., W_{n}\},$ on $\bar{M}$ along M, where $\{\xi_{i}\}_{i=1}^{r}$ and $\{N_{i}\}_{i=1}^{r}$ are lightlike bases of $\Gamma(Rad(TM)|_{\mathcal{U}})$ and $\Gamma(ltr(TM)|_{\mathcal{U}})$, respectively and $\{e_{\alpha}\}_{\alpha=r+1}^{m}$ and $\{W_{\beta}\}_{\beta=r+1}^{n}$ are orthonormal bases of $\Gamma(S(TM)|_{\mathcal{U}})$ and $\Gamma(S(TM^{\bot})|_{\mathcal{U}})$, respectively. Then the statistical Ricci tensor $\bar{R}ic$ on $\bar{M}$ is defined by
$$
\bar{R}ic(X, Y) = trace\{Z\rightarrow \bar{S}(X, Z)Y\},
$$
for any $X, Y\in\Gamma(T\bar{M})$ and locally $\bar{R}ic$ on $\bar{M}$ is given by
$$
\bar{R}ic(X, Y) = \sum_{i=1}^{m+n}\epsilon_{i}\bar{g}(\bar{S}(X, e_{i})Y, e_{i}),
$$
where $\epsilon_{i}$ is signature of $e_{i}$. Form (\ref{eq:38}) and (\ref{eq:39}), it is clear that the Ricci tensor $\bar{R}ic$ on $\bar{M}$ is symmetric. For the induced statistical curvature tensor field $S$, the induced statistical Ricci tensor $Ric$ on $M$ is defined as
$$
Ric(X, Y) = trace\{Z\rightarrow S(X, Z)Y\},
$$
for any $X, Y\in\Gamma(M)$ and locally $Ric$ on $M$ is given by
\begin{eqnarray}\label{eq:43}
Ric(X, Y) = \sum_{i=1}^{r}\bar{g}(S(X, \xi_{i})Y, N_{i}) + \sum_{\alpha=r+1}^{m}\bar{g}(S(X, e_{\alpha})Y, e_{\alpha}).
\end{eqnarray}
It should be noted that the induced Ricci tensor of a lightlike submanifold $M$ of a semi-Riemannian manifold $\bar{M}$ is not symmetric because the induced connection $\nabla$ on a lightlike submanifold $M$ is not a metric connection. Therefore induced Ricci tensor is just a tensor quantity and has no geometric or physical meaning contrary to the symmetric Ricci tensor $\bar{R}ic$ of $\bar{M}$. We have analogous situation for a lightlike submanifold $M$ of an indefinite statistical manifold $(\bar{M}, \bar{g}, \bar{\nabla}, \bar{\nabla}^{*})$, where the lightlike submanifold $M$ need not be statistical and moreover the induced metric $g$ is not metric but the statistical Ricci tensor $\bar{R}ic$ on $\bar{M}$ is symmetric. Since the symmetry of induced statistical Ricci tensor $Ric$ is important and we have following important
observation from the Theorem \ref{thm:B} immediately.
\begin{theorem}
Let $M$ be a dual-autoparallel lightlike submanifold of an indefinite statistical manifold $(\bar{M}, \bar{g}, \bar{\nabla}, \bar{\nabla}^{*})$. Then the induced statistical Ricci tensor $Ric$ on $M$ is symmetric.
\end{theorem}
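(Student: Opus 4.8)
The plan is to collapse the induced statistical curvature onto the ambient one and then read off the symmetry from the algebraic identities of $\bar S$. Since $M$ is dual-autoparallel we have $h^{l}=h^{s}=h^{*l}=h^{*s}=0$, so Theorem \ref{thm:B} and its dual give $R(X,Y)Z=\bar R(X,Y)Z$ and $R^{*}(X,Y)Z=\bar R^{*}(X,Y)Z$ for all $X,Y,Z\in\Gamma(TM)$. Averaging, $S(X,Y)Z=\bar S(X,Y)Z$ whenever the three arguments are tangent. Substituting this into (\ref{eq:43}) I would write
\begin{equation*}
Ric(X,Y)=\sum_{i=1}^{r}\bar g(\bar S(X,\xi_{i})Y,N_{i})+\sum_{\alpha=r+1}^{m}\bar g(\bar S(X,e_{\alpha})Y,e_{\alpha}),
\end{equation*}
reducing the whole question to exploiting the symmetries (\ref{eq:38})--(\ref{eq:39}) of the ambient statistical curvature tensor.

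Next I would compute the skew part $Ric(X,Y)-Ric(Y,X)$ termwise. The first Bianchi identity in (\ref{eq:39}), together with the skew-symmetry in the first two slots from (\ref{eq:38}), yields $\bar S(X,\xi_{i})Y-\bar S(Y,\xi_{i})X=\bar S(X,Y)\xi_{i}$ and likewise $\bar S(X,e_{\alpha})Y-\bar S(Y,e_{\alpha})X=\bar S(X,Y)e_{\alpha}$. Feeding these back, the screen contribution collapses to $\sum_{\alpha}\bar g(\bar S(X,Y)e_{\alpha},e_{\alpha})$, which vanishes because the second identity in (\ref{eq:38}) makes $\bar S(X,Y)$ skew-adjoint with respect to $\bar g$, so its diagonal entries in an orthonormal screen frame are zero. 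Hence
\begin{equation*}
Ric(X,Y)-Ric(Y,X)=\sum_{i=1}^{r}\bar g(\bar S(X,Y)\xi_{i},N_{i}),
\end{equation*}
and everything is reduced to the vanishing of this \emph{radical trace}.

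The hard part is precisely this last sum. Unlike the screen term, it is a genuine partial trace taken against the lightlike transversal frame $\{N_{i}\}$, and it is not killed by skew-adjointness alone: pairing $\bar S(X,Y)\xi_{i}$ only with $N_{i}$ leaves exactly the trace of the statistical curvature of the radical bundle. To gain control I would use the dual-autoparallel hypothesis a second time. From (\ref{eq:15a}) and its dual, with all second fundamental forms vanishing, one gets $g(Y,\nabla_{X}\xi)=g(Y,\nabla^{*}_{X}\xi)=0$ for every $Y\in\Gamma(TM)$, so $\nabla_{X}\xi,\nabla^{*}_{X}\xi\in\Gamma(Rad(TM))$; thus $Rad(TM)$ is parallel for both induced connections and consequently $\bar S(X,Y)\xi_{i}=S(X,Y)\xi_{i}\in\Gamma(Rad(TM))$. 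Writing $\bar S(X,Y)\xi_{i}=\sum_{j}\mu_{ij}\xi_{j}$, the radical trace becomes $\mathrm{tr}(\mu)$, the trace of the statistical curvature of $Rad(TM)$ alone.

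To finish I would try to force $\mathrm{tr}(\mu)=0$ by playing the connection induced on $Rad(TM)$ against the one induced on $ltr(TM)$: differentiating $\bar g(\xi_{i},N_{j})=\delta^{i}_{j}$ with (\ref{eq:9}) shows that the connection matrices of $\nabla$ on $Rad(TM)$ and of $\nabla^{*}$ on $ltr(TM)$ are negative transposes of each other, and symmetrically for the starred pair, so that $\nabla$ and $\nabla^{*}$ restrict to mutually conjugate connections on the dual pair $(Rad(TM),ltr(TM))$; since $\bar S=\tfrac12(\bar R+\bar R^{*})$, one then hopes the radical traces produced by $\bar R$ and by $\bar R^{*}$ to be opposite and cancel. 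This cancellation is exactly the step I expect to be the main obstacle: it is where the conjugate (statistical) structure is indispensable, and it must be checked with care, since $Ric$ is only the tangential partial trace of the symmetric ambient Ricci tensor rather than the full one, so merely invoking $S=\bar S$ and the symmetry of $\bar R ic$ does not by itself deliver the result.
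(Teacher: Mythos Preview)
Your starting move coincides with the paper's: the paper records this theorem as following ``immediately'' from Theorem~\ref{thm:B}, i.e.\ from the identification $S(X,Y)Z=\bar S(X,Y)Z$ for tangent $X,Y,Z$ that dual-autoparallelism yields. You reproduce exactly that step and then push further than the paper does, reducing $Ric(X,Y)-Ric(Y,X)$ via first Bianchi and the skew-adjointness in (\ref{eq:38}) to the radical trace $\sum_i \bar g(\bar S(X,Y)\xi_i,N_i)$. That reduction is correct, and your observation that the screen sum dies while the radical sum does \emph{not} die from (\ref{eq:38})--(\ref{eq:39}) alone is the real point: skew-adjointness of $\bar S(X,Y)$ only gives $\bar g(\bar S(X,Y)\xi_i,N_i)=-\bar g(\bar S(X,Y)N_i,\xi_i)$, which is a tautology, not a cancellation.

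Where your proposal remains genuinely incomplete is the last paragraph. Differentiating $\bar g(\xi_i,N_j)=\delta^i_j$ with (\ref{eq:9}) indeed gives that the connection matrix of $\nabla$ on $Rad(TM)$ is minus the transpose of that of $\nabla^{*l}$ on $ltr(TM)$, and dually for $\nabla^{*}$ and $\nabla^{l}$. But these two relations link $\nabla|_{Rad}$ to $\nabla^{*l}|_{ltr}$ and $\nabla^{*}|_{Rad}$ to $\nabla^{l}|_{ltr}$; they do \emph{not} relate $\nabla|_{Rad}$ to $\nabla^{*}|_{Rad}$, which is what you need in order to force $\mathrm{tr}\,R|_{Rad}+\mathrm{tr}\,R^{*}|_{Rad}=0$. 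Concretely, for $r=1$ one has scalar connection forms $A,B$ on $Rad(TM)$ and $C,C^{*}$ on $ltr(TM)$ with $A=-C^{*}$, $B=-C$, and you are asking for $d(A+B)=-d(C+C^{*})=0$; nothing you have derived forces this $1$-form to be closed. So the ``hoped for'' cancellation of the $\bar R$ and $\bar R^{*}$ radical traces is not established by your argument. In summary: your route matches the paper's, you have sharpened the issue well beyond the paper's one-line claim, but the radical-trace step is still a gap that neither your sketch nor the paper's ``immediately'' actually closes.
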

\begin{definition}(\cite{kup})
A lightlike submanifold $M$ of a semi-Riemannian manifold $\bar{M}$ is said to be irrotational lightlike submanifold if $\tilde{\nabla}_{X}\xi\in\Gamma(TM)$, for any $X\in\Gamma(TM)$ and $\xi\in\Gamma(Rad(TM))$.
\end{definition}
\noindent Hence it is clear that if $M$ is an irrotational lightlike submanifold then $h^{l}(X, \xi) = h^{s}(X, \xi) = 0$, for any $X\in\Gamma(TM)$ and $\xi\in\Gamma(Rad(TM))$.
\begin{theorem}\label{thm:C}
Let $M$ be an irrotational lightlike submanifold of an indefinite statistical manifold $(\bar{M}, \bar{g}, \bar{\nabla}, \bar{\nabla}^{*})$ of constant sectional curvature $c$ such that the screen distribution $S(TM)$ is parallel with respect to the induced connection $\nabla$ and $D^{l}(X, W) = 0$, for any $X\in\Gamma(TM)$ and $W\in\Gamma(S(TM^{\bot}))$. Then the induced statistical Ricci tensor $Ric$ is symmetric.
\end{theorem}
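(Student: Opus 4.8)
The plan is to prove the stronger statement that the antisymmetric part $Ric(X,Y)-Ric(Y,X)$ vanishes identically, computing from the local formula (\ref{eq:43}) and the Gauss-type relation (\ref{eq:40}) for the induced statistical curvature tensor $S$. First I would form $2\bigl(S(X,t)Y-S(Y,t)X\bigr)$ from (\ref{eq:40}) and apply the first Bianchi identity contained in (\ref{eq:39}), which gives $\bar S(X,t)Y-\bar S(Y,t)X=\bar S(X,Y)t$; this collapses the ambient contribution to $\sum_{i}\bar g(\bar S(X,Y)\xi_{i},N_{i})+\sum_{\alpha}\bar g(\bar S(X,Y)e_{\alpha},e_{\alpha})$. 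Invoking the constant sectional curvature hypothesis (\ref{eq:42}) together with $\bar g(\xi_{i},Z)=0$ for all $Z\in\Gamma(TM)$ (the radical is $\bar g$-orthogonal to $TM$), one finds $\bar S(X,Y)\xi_{i}=c\{\bar g(Y,\xi_{i})X-\bar g(X,\xi_{i})Y\}=0$ and $\bar g(\bar S(X,Y)e_{\alpha},e_{\alpha})=c\{\bar g(Y,e_{\alpha})\bar g(X,e_{\alpha})-\bar g(X,e_{\alpha})\bar g(Y,e_{\alpha})\}=0$, so the ambient part drops out and $Ric(X,Y)-Ric(Y,X)$ is reduced to the antisymmetric part of the shape-operator corrections.

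Now I would exploit that $h^{l},h^{s},h^{*l},h^{*s}$ are symmetric: every correction term in which the trace variable $t$ sits \emph{outside} a second fundamental form is symmetric under $X\leftrightarrow Y$ and cancels, leaving only the four families $A_{h^{l}(t,\cdot)}$, $A_{h^{s}(t,\cdot)}$, $A^{*}_{h^{*l}(t,\cdot)}$, $A^{*}_{h^{*s}(t,\cdot)}$. For the radical part $t=\xi_{i}$ the irrotational hypothesis, applied to both $\bar\nabla$ and $\bar\nabla^{*}$ so that $h^{l}(\xi,\cdot)=h^{s}(\xi,\cdot)=h^{*l}(\xi,\cdot)=h^{*s}(\xi,\cdot)=0$, annihilates all four families at once. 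For the screen part $t=e_{\alpha}$ I would use the parallelism of $S(TM)$: from (\ref{eq:6}) and (\ref{eq:15c}) (and their duals) the condition $h'=h^{*'}=0$ forces both $A_{N}X$ and $A^{*}_{N}X$ to lie in $\Gamma(Rad(TM))$, which is $\bar g$-orthogonal to $S(TM)$; hence the $h^{l}$- and $h^{*l}$-families pair to zero against $e_{\alpha}$.

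It remains to treat the $h^{s}$- and $h^{*s}$-families, where the essential cancellation occurs and which I expect to be the main obstacle. Restricting (\ref{eq:15b}) and its dual to $S(TM)$ — here the hypothesis $D^{l}(X,W)=0$ (with its dual) is what reduces (\ref{eq:4}) to the clean identities $\bar g(A^{*}_{W}Y,e_{\alpha})=\bar g(h^{s}(Y,e_{\alpha}),W)$ and $\bar g(A_{W}Y,e_{\alpha})=\bar g(h^{*s}(Y,e_{\alpha}),W)$ — and then substituting $W=h^{s}(e_{\alpha},\cdot)$ and $W=h^{*s}(e_{\alpha},\cdot)$, the $\bar\nabla$- and $\bar\nabla^{*}$-contributions turn into sums of the form $\sum_{\alpha}\bar g(h^{*s}(\,\cdot\,,e_{\alpha}),h^{s}(e_{\alpha},\,\cdot\,))$ that annihilate one another in pairs once the roles of $X$ and $Y$ are swapped. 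The delicate points are: to use the genuinely \emph{statistical} structure equations (\ref{eq:15a})--(\ref{eq:18}) and their duals rather than the Levi-Civita relations of Section 2 (these differ precisely by interchanging starred and unstarred objects), to read the hypotheses for the dual connection as well, and to track for each term which of $Rad(TM)$, $S(TM)$, $ltr(TM)$, $S(TM^{\bot})$ it lives in before pairing with $N_{i}$ or $e_{\alpha}$. Once the starred and unstarred screen terms are shown to cancel, the antisymmetric part is zero and $Ric$ is symmetric.
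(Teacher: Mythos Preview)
Your antisymmetric--part strategy via the first Bianchi identity in (\ref{eq:39}) is a legitimate alternative to the paper's route, which instead computes $Ric(X,Y)$ term by term from (\ref{eq:40}) and (\ref{eq:42}) and checks that each of (\ref{eq:45}) and (\ref{eq:49}) is already symmetric in $X,Y$. Your reduction of the ambient contribution is correct, and so is the cancellation mechanism you describe for the screen $h^{s}/h^{*s}$--terms.

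There is, however, a genuine gap. You repeatedly treat the hypotheses as if they were two--sided, writing ``the irrotational hypothesis, applied to both $\bar\nabla$ and $\bar\nabla^{*}$'', ``$h'=h^{*'}=0$'', and ``$D^{l}(X,W)=0$ (with its dual)''. None of the starred statements is given: the theorem provides only $h^{l}(\xi,\cdot)=h^{s}(\xi,\cdot)=0$, $S(TM)$ parallel with respect to $\nabla$ (so $h'=0$), and $D^{l}=0$. The starred analogues must be \emph{derived}, and this is exactly where the paper's proof does its real work. From (\ref{eq:15a}) with $Y\in\Gamma(Rad(TM))$ together with irrotationality one extracts $h^{*l}(X,\xi)=0$; from the dual of (\ref{eq:15b}) with $Y=\xi_{i}$ one gets $\bar g(h^{*s}(X,\xi_{i}),W)+\bar g(D^{l}(X,W),\xi_{i})=0$, and here --- not in the screen part --- the hypothesis $D^{l}=0$ is what forces $h^{*s}(X,\xi_{i})=0$ (note $\bar g(D^{l}(X,W),\xi_{i})$ does \emph{not} vanish for free, since $ltr(TM)$ pairs nontrivially with $Rad(TM)$); and from (\ref{eq:18}) with $Y=Z=e_{\alpha}$ the paper argues that $\nabla_{X}e_{\alpha}\in\Gamma(S(TM))$ forces $\nabla^{*}_{X}e_{\alpha}\in\Gamma(S(TM))$, which is what you need to kill $\bar g(A_{h^{l}(e_{\alpha},Y)}X,e_{\alpha})$ via (\ref{eq:48b}).

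You have also misplaced the role of $D^{l}=0$ in the screen computation: in (\ref{eq:15b}) and its dual with $Y=e_{\alpha}$, the terms $\bar g(D^{*l}(X,W),e_{\alpha})$ and $\bar g(D^{l}(X,W),e_{\alpha})$ already vanish because $D^{l},D^{*l}$ take values in $ltr(TM)\perp S(TM)$, so neither $D^{l}=0$ nor any putative $D^{*l}=0$ is needed there. Once you insert the three derivations above in place of the assumed dual hypotheses, your Bianchi argument goes through.
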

\begin{proof}
Let $\{\xi_{i}\}_{i=1}^{r}$ and $\{N_{i}\}_{i=1}^{r}$ be lightlike bases of $\Gamma(Rad(TM)|_{\mathcal{U}})$ and $\Gamma(ltr(TM)|_{\mathcal{U}})$, respectively then using (\ref{eq:42}) and (\ref{eq:40}) for any $X, Y\in\Gamma(TM)$, we obtain
\begin{eqnarray}\label{eq:44}
\sum_{i=1}^{r}\bar{g}(S(X, \xi_{i})Y, N_{i}) &=& - rc\bar{g}(X, Y) -\frac{1}{2}\sum_{i=1}^{r}\{\bar{g}(A_{h^{l}(X, Y)}\xi_{i}, N_{i}) - \bar{g}(A_{h^{l}(\xi_{i}, Y)}X, N_{i})\nonumber\\&&
+ \bar{g}(A_{h^{s}(X, Y)}\xi_{i}, N_{i}) - \bar{g}(A_{h^{s}(\xi_{i}, Y)}X, N_{i})\nonumber\\&&
+ \bar{g}(A^{*}_{h^{*l}(X, Y)}\xi_{i}, N_{i}) - \bar{g}(A^{*}_{h^{*l}(\xi_{i}, Y)}X, N_{i})\nonumber\\&&
+ \bar{g}(A^{*}_{h^{*s}(X, Y)}\xi_{i}, N_{i}) - \bar{g}(A^{*}_{h^{*s}(\xi_{i}, Y)}X, N_{i})\}.
\end{eqnarray}
On putting $Y = \xi_{i}\in\Gamma(Rad(TM))$ in (\ref{eq:15a}), it implies that $\bar{g}(h^{l}(X, \xi_{i}) + h^{*l}(X, \xi_{i}), \xi_{i}) = 0$, it further implies $h^{l}(X, \xi_{i}) = - h^{*l}(X, \xi_{i})$. Since $M$ is an irrotational lightlike submanifold therefore $h^{l}(X, \xi_{i}) = h^{*l}(X, \xi_{i}) = 0$. Also on putting $Y = \xi_{i}$ in the dual of (\ref{eq:15b}) and then using $D^{l}(X, W) = 0$, we obtain $\bar{g}(h^{*s}(X, \xi_{i}), W) = 0$ then further non-degeneracy of $S(TM^{\bot})$ implies that $h^{*s}(X, \xi_{i}) = 0$ and moreover $h^{s}(X, \xi_{i}) = 0$, as $M$ is an irrotational lightlike submanifold. On using these facts in (\ref{eq:44}), we obtain
\begin{eqnarray}\label{eq:45}
\sum_{i=1}^{r}\bar{g}(S(X, \xi_{i})Y, N_{i}) &=& - rc\bar{g}(X, Y) -\frac{1}{2}\sum_{i=1}^{r}\{\bar{g}(A_{h^{l}(X, Y)}\xi_{i}, N_{i}) + \bar{g}(A_{h^{s}(X, Y)}\xi_{i}, N_{i})\nonumber\\&&
+ \bar{g}(A^{*}_{h^{*l}(X, Y)}\xi_{i}, N_{i}) + \bar{g}(A^{*}_{h^{*s}(X, Y)}\xi_{i}, N_{i})\}.
\end{eqnarray}
Let $\{e_{\alpha}\}_{\alpha=r+1}^{m}$ be an orthonormal basis of $\Gamma(S(TM))$ then using (\ref{eq:42}) and (\ref{eq:40}) for any $X, Y\in\Gamma(TM)$, we obtain
\begin{eqnarray}\label{eq:46}
\sum_{\alpha=r+1}^{m}\bar{g}(S(X, e_{\alpha})Y, e_{\alpha}) &=& c\sum_{\alpha=r+1}^{m}\bar{g}(e_{\alpha}, Y)\bar{g}(X, e_{\alpha}) - \bar{g}(X, Y)(m - r - 1)\nonumber\\&&
-\frac{1}{2}\sum_{\alpha=r+1}^{m}\{\bar{g}(A_{h^{l}(X, Y)}e_{\alpha}, e_{\alpha}) - \bar{g}(A_{h^{l}(e_{\alpha}, Y)}X, e_{\alpha})\nonumber\\&&
+ \bar{g}(A_{h^{s}(X, Y)}e_{\alpha}, e_{\alpha}) - \bar{g}(A_{h^{s}(e_{\alpha}, Y)}X, e_{\alpha})\nonumber\\&&
+ \bar{g}(A^{*}_{h^{*l}(X, Y)}e_{\alpha}, e_{\alpha}) - \bar{g}(A^{*}_{h^{*l}(e_{\alpha}, Y)}X, e_{\alpha})\nonumber\\&&
+ \bar{g}(A^{*}_{h^{*s}(X, Y)}e_{\alpha}, e_{\alpha}) - \bar{g}(A^{*}_{h^{*s}(e_{\alpha}, Y)}X, e_{\alpha})\}.
\end{eqnarray}
On putting $Y = e_{\alpha}$ in (\ref{eq:15b}) and its dual, we obtain
\begin{equation}\label{eq:47}
\bar{g}(A^{*}_{W}X, e_{\alpha}) = \bar{g}(h^{s}(X, e_{\alpha}), W),\quad \bar{g}(A_{W}X, e_{\alpha}) = \bar{g}(h^{*s}(X, e_{\alpha}), W),
\end{equation}
for any $X\in\Gamma(TM)$ and $W\in\Gamma(S(TM^{\bot}))$. Further using (\ref{eq:47}) in (\ref{eq:46}), we derive
\begin{eqnarray}\label{eq:48}
\sum_{\alpha=r+1}^{m}\bar{g}(S(X, e_{\alpha})Y, e_{\alpha}) &=& c\sum_{\alpha=r+1}^{m}\bar{g}(e_{\alpha}, Y)\bar{g}(X, e_{\alpha}) - \bar{g}(X, Y)(m - r - 1)\nonumber\\&&
-\frac{1}{2}\sum_{\alpha=r+1}^{m}\{\bar{g}(A_{h^{l}(X, Y)}e_{\alpha}, e_{\alpha}) - \bar{g}(A_{h^{l}(e_{\alpha}, Y)}X, e_{\alpha})\nonumber\\&&
+ \bar{g}(A_{h^{s}(X, Y)}e_{\alpha}, e_{\alpha}) - \bar{g}(h^{*s}(X, e_{\alpha}), h^{s}(e_{\alpha}, Y))\nonumber\\&&
+ \bar{g}(A^{*}_{h^{*l}(X, Y)}e_{\alpha}, e_{\alpha}) - \bar{g}(A^{*}_{h^{*l}(e_{\alpha}, Y)}X, e_{\alpha})\nonumber\\&&
+ \bar{g}(A^{*}_{h^{*s}(X, Y)}e_{\alpha}, e_{\alpha}) - \bar{g}(h^{s}(X, e_{\alpha}), h^{*s}(e_{\alpha}, Y))\}.
\end{eqnarray}
Now assume that the screen distribution be parallel with respect to the induced connection $\nabla$ therefore $\nabla_{X}e_{\alpha}\in\Gamma(S(TM))$. On taking $Y = Z = e_{\alpha}$ in (\ref{eq:18}) and then taking summation from $\alpha = r + 1$ to $m$, we get
\begin{eqnarray}\label{eq:48a}
\sum_{\alpha=r+1}^{m}\{\bar{g}(\nabla_{X}e_{\alpha}, e_{\alpha}) + \bar{g}(\nabla^{*}_{X}e_{\alpha}, e_{\alpha})\} = 0.
\end{eqnarray}
Since $\nabla_{X}e_{\alpha}\in\Gamma(S(TM))$ then using the non-degeneracy of the screen distribution, we have $\bar{g}(\nabla_{X}e_{\alpha}, e_{\alpha})\neq 0$ and on using this fact in the last expression, we get
$\bar{g}(\nabla^{*}_{X}e_{\alpha}, e_{\alpha})\neq 0$ which implies $\nabla^{*}_{X}e_{\alpha}\in\Gamma(S(TM))$. Using (\ref{eq:9}) and (\ref{eq:11}), we derive
\begin{eqnarray}\label{eq:48b}
&\bar{g}(A_{h^{l}(e_{\alpha}, Y)}X, e_{\alpha})& = - \bar{g}(\bar{\nabla}_{X}h^{l}(e_{\alpha}, Y), e_{\alpha}) = \bar{g}(h^{l}(e_{\alpha}, Y), \bar{\nabla}^{*}_{X}e_{\alpha})\nonumber\\&& = \bar{g}(h^{l}(e_{\alpha}, Y), \nabla^{*}_{X}e_{\alpha}).
\end{eqnarray}
Since $\nabla^{*}_{X}e_{\alpha}\in\Gamma(S(TM))$ then the last expression implies $\bar{g}(A_{h^{l}(e_{\alpha}, Y)}X, e_{\alpha}) = 0$. Analogously, on using the duality, we have $\bar{g}(A^{*}_{h^{*l}(e_{\alpha}, Y)}X, e_{\alpha}) = 0$. Using these facts in (\ref{eq:48}), we obtain
\begin{eqnarray}\label{eq:49}
\sum_{\alpha=r+1}^{m}\bar{g}(S(X, e_{\alpha})Y, e_{\alpha}) &=& c\sum_{\alpha=r+1}^{m}\bar{g}(e_{\alpha}, Y)\bar{g}(X, e_{\alpha}) - \bar{g}(X, Y)(m - r - 1)\nonumber\\&&
-\frac{1}{2}\sum_{\alpha=r+1}^{m}\{\bar{g}(A_{h^{l}(X, Y)}e_{\alpha}, e_{\alpha}) + \bar{g}(A_{h^{s}(X, Y)}e_{\alpha}, e_{\alpha})\nonumber\\&&
- \bar{g}(h^{*s}(X, e_{\alpha}), h^{s}(e_{\alpha}, Y)) + \bar{g}(A^{*}_{h^{*l}(X, Y)}e_{\alpha}, e_{\alpha}) \nonumber\\&&
+ \bar{g}(A^{*}_{h^{*s}(X, Y)}e_{\alpha}, e_{\alpha}) - \bar{g}(h^{s}(X, e_{\alpha}), h^{*s}(e_{\alpha}, Y))\}.
\end{eqnarray}
Thus from (\ref{eq:45}) and (\ref{eq:49}), it is immediate that the induced statistical Ricci tensor $Ric$ on $M$ is symmetric.
\end{proof}
\begin{theorem}
Let $M$ be a lightlike submanifold of an indefinite statistical manifold $(\bar{M}, \bar{g}, \bar{\nabla}, \bar{\nabla}^{*})$ of constant sectional curvature $c$ such that $D^{s}(X, N) = 0$, for any $X\in\Gamma(TM)$ and $N\in\Gamma(ltr(TM))$. Suppose there exists a transversal vector bundle of $M$ which is parallel along $TM$ with respect to the connection $\bar{\nabla}$ on $\bar{M}$, that is,
\begin{eqnarray}\label{eq:50}
\bar{\nabla}_{X}V\in\Gamma(tr(TM)),\quad \forall~ V\in\Gamma(tr(TM)),~ X\in\Gamma(TM).
\end{eqnarray}
Then the induced statistical Ricci tensor $Ric$ is symmetric.
\end{theorem}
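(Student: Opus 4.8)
The plan is to evaluate the two traces in (\ref{eq:43}) by inserting the Gauss-type decomposition (\ref{eq:40}) of the induced statistical curvature tensor together with the constant-sectional-curvature hypothesis (\ref{eq:42}), and then to show that after using the two standing assumptions every term which is not manifestly symmetric in $X$ and $Y$ cancels or vanishes. First I would translate the parallelism hypothesis (\ref{eq:50}): feeding $N\in\Gamma(ltr(TM))$ and $W\in\Gamma(S(TM^{\bot}))$ into it and reading off the tangential components of the Weingarten equations (\ref{eq:11}) and (\ref{eq:12}) forces $A_{N}=0$ and $A_{W}=0$; combined with $D^{s}(X,N)=0$ this gives the clean identity $\bar{\nabla}_{X}N=\nabla^{l}_{X}N\in\Gamma(ltr(TM))$ for all $X\in\Gamma(TM)$, which will be the workhorse for the dual shape operators.

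For the radical trace $\sum_{i}\bar{g}(S(X,\xi_{i})Y,N_{i})$ the vanishing $A_{N}=A_{W}=0$ removes every unstarred shape term in (\ref{eq:40}). For the starred ones I would pair with $N_{i}$ and use (\ref{eq:9}) together with $\bar{\nabla}_{Z}N_{i}=\nabla^{l}_{Z}N_{i}\in\Gamma(ltr(TM))$ to get $\bar{g}(A^{*}_{N}Z,N_{i})=0$ and $\bar{g}(A^{*}_{W}Z,N_{i})=0$ for every $Z\in\Gamma(TM)$, i.e. $A^{*}_{N}Z$ and $A^{*}_{W}Z$ have no radical component. Hence all $A^{*}$-terms die against $N_{i}$, while (\ref{eq:42}) contributes only $-rc\,\bar{g}(X,Y)$, the cross term $c\sum_{i}\bar{g}(\xi_{i},Y)\bar{g}(X,N_{i})$ being zero because $\bar{g}(\xi_{i},Y)=0$ for $Y\in\Gamma(TM)$. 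Thus $\sum_{i}\bar{g}(S(X,\xi_{i})Y,N_{i})=-rc\,\bar{g}(X,Y)$, which is symmetric.

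For the screen trace $\sum_{\alpha}\bar{g}(S(X,e_{\alpha})Y,e_{\alpha})$ the part coming from (\ref{eq:42}) is $c\sum_{\alpha}\bar{g}(X,e_{\alpha})\bar{g}(e_{\alpha},Y)$ minus a multiple of $\bar{g}(X,Y)$, which is symmetric, and the unstarred shape terms again vanish. The terms built from $h^{*l}(X,Y)$ and $h^{*s}(X,Y)$ are symmetric since $h^{*l},h^{*s}$ are. The decisive reductions come from the two vanishings: using the dual of (\ref{eq:15b}) with $Y=e_{\alpha}$ and $A_{W}=0$ gives $\bar{g}(h^{*s}(X,e_{\alpha}),W)=0$, hence $h^{*s}(X,e_{\alpha})=0$ by non-degeneracy of $S(TM^{\bot})$, killing the $h^{s}$--$h^{*s}$ cross term; and $A_{N}=0$ forces the dual screen form to vanish, $h^{*'}=0$, through the statistical counterpart $g(A_{N}X,PY)=\bar{g}(h^{*'}(X,PY),N)$ of (\ref{eq:6}).

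The step I expect to be the main obstacle is the single surviving non-symmetric contribution, the $h^{*l}$ term $\sum_{\alpha}\bar{g}(A^{*}_{h^{*l}(e_{\alpha},Y)}X,e_{\alpha})$. By the dual counterpart $g(A^{*}_{N}X,PY)=\bar{g}(h^{'}(X,PY),N)$ of (\ref{eq:6}) this equals $\sum_{\alpha}\bar{g}(h^{'}(X,e_{\alpha}),h^{*l}(e_{\alpha},Y))=\sum_{j}g(A^{*}_{N_{j}}X,A^{'}_{\xi_{j}}Y)$, which is not visibly symmetric in $X$ and $Y$ (note the parallelism is imposed only on $\bar{\nabla}$, so $h^{'}$ and $A^{'}_{\xi}$ need not vanish). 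The crux of the argument is therefore to prove that this term is symmetric, and I would try to achieve it by extracting from (\ref{eq:50}), the torsion-freeness of $\bar{\nabla}$ and the Codazzi equation (\ref{eq:8}) the additional vanishing $h^{*l}(\cdot,e_{\alpha})=0$ on $S(TM)$, equivalently $A^{'}_{\xi}=0$; once this is in hand the screen trace collapses to its symmetric curvature part and, with the radical computation above, the symmetry of $Ric$ follows from (\ref{eq:43}).
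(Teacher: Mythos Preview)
Your overall architecture matches the paper's: plug (\ref{eq:40}) and (\ref{eq:42}) into (\ref{eq:43}), read off $A_{N}=A_{W}=0$ from (\ref{eq:50}), and reduce the radical trace to $-rc\,\bar g(X,Y)$. Your derivations of $A^{*}_{N}X,\,A^{*}_{W}X\in\Gamma(S(TM))$ and of $h^{*'}=0$ (equivalently $\nabla^{*}_{X}e_{\alpha}\in\Gamma(S(TM))$) from $A_{N}=0$ are exactly what the paper does, as is the use of the dual of (\ref{eq:15b}) to kill the $h^{*s}(\cdot,e_{\alpha})$ terms.

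The gap is in the last step, and it is a wrong target rather than a missing detail. You rewrite the obstruction as $\sum_{j} g(A^{*}_{N_{j}}X,\,A'_{\xi_{j}}Y)$ and then aim to annihilate the second factor by proving $A'_{\xi}=0$ (equivalently $h^{*l}(\cdot,e_{\alpha})=0$). Nothing in the hypotheses forces $Rad(TM)$ to be $\nabla$-parallel: (\ref{eq:50}) and $D^{s}(X,N)=0$ control the transversal bundle, not how $Rad(TM)$ sits inside $TM$, and neither torsion-freeness nor (\ref{eq:8}) bridges that gap. The paper kills the \emph{other} factor. From $A_{N}=0$ it gets $\nabla^{*}_{X}e_{\alpha}\in\Gamma(S(TM))$ (your $h^{*'}=0$), then argues via (\ref{eq:48a}) that also $\nabla_{X}e_{\alpha}\in\Gamma(S(TM))$; feeding this into (\ref{eq:15c}) with $Y=e_{\alpha}$ yields $\bar g(A^{*}_{N}X,e_{\alpha})=0$, which together with (\ref{eq:53}) gives $A^{*}_{N}X=0$. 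That annihilates the first factor of your product and disposes of both $h^{*l}$-terms at once, so (\ref{eq:46}) collapses to its symmetric curvature part. In short: you already have $h^{*'}=0$; the missing move is to pass from it to $h'(X,e_{\alpha})=0$ and hence $\bar g(A^{*}_{N}X,e_{\alpha})=0$, rather than chasing $A'_{\xi}=0$.
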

\begin{proof}
Let $N\in\Gamma(ltr(TM))\subset\Gamma(tr(TM))$ and $W\in\Gamma(S(TM^{\bot}))\subset\Gamma(tr(TM))$ then using the hypothesis (\ref{eq:50}) in (\ref{eq:11}) and (\ref{eq:12}), we obtain respectively
\begin{equation}\label{eq:51}
A_{N}X = 0,\quad A_{W}X = 0.
\end{equation}
On putting $Y = N\in\Gamma(ltr(TM))$ and $Z = W\in\Gamma(S(TM^{\bot}))$ in (\ref{eq:9}) and then using (\ref{eq:51}), we get $\bar{g}(A^{*}_{W}X, N) = \bar{g}(D^{s}(X, N), W)$, further using the hypothesis $D^{s}(X, N) = 0$, it follows that $\bar{g}(A^{*}_{W}X, N) = 0$. This implies that
\begin{equation}\label{eq:52}
A^{*}_{W}X\in\Gamma(S(TM)).
\end{equation}
On putting $Y = Z = N\in\Gamma(ltr(TM))$ in (\ref{eq:9}) and then using (\ref{eq:51}), we get $\bar{g}(A^{*}_{N}X, N) = 0$, this implies that
\begin{equation}\label{eq:53}
A^{*}_{N}X\in\Gamma(S(TM)).
\end{equation}
On using (\ref{eq:51}) to (\ref{eq:53}) in (\ref{eq:44}), we obtain
\begin{equation}\label{eq:54}
\sum_{i=1}^{r}\bar{g}(S(X, \xi_{i})Y, N_{i}) = - rc\bar{g}(X, Y).
\end{equation}
Let $\{e_{\alpha}\}_{\alpha=r+1}^{m}$ be an orthonormal basis of $\Gamma(S(TM))$ then using (\ref{eq:9}), (\ref{eq:10}), (\ref{eq:11}) and (\ref{eq:51}), we have $0 = \bar{g}(A_{N}X, e_{\alpha}) = \bar{g}(N, \nabla^{*}_{X}e_{\alpha})$, this implies that $\nabla^{*}_{X}e_{\alpha}\in\Gamma(S(TM))$. Then following the proof of the Theorem \ref{thm:C} with (\ref{eq:48a}), it implies that $\nabla_{X}e_{\alpha}\in\Gamma(S(TM))$ using this fact, after putting $Y = e_{\alpha}$ in (\ref{eq:15c}), we obtain
\begin{equation}\label{eq:55}
\bar{g}(A^{*}_{N}X, e_{\alpha}) = 0.
\end{equation}
Now, let $X, Y\in\Gamma(TM)$ then using (\ref{eq:9}), (\ref{eq:10}) and (\ref{eq:12}), we have $\bar{g}(A_{h^{s}(e_{\alpha}, X)}Y, e_{\alpha}) = \bar{g}(h^{s}(e_{\alpha}, X), h^{*s}(Y, e_{\alpha}))$ therefore using (\ref{eq:51}), we obtain
\begin{equation}\label{eq:56}
\bar{g}(h^{s}(e_{\alpha}, X), h^{*s}(Y, e_{\alpha})) = 0,\quad\forall~X, Y\in\Gamma(TM).
\end{equation}
Also using (\ref{eq:9}), (\ref{eq:10}) and (\ref{eq:12}), we have $\bar{g}(A^{*}_{h^{*s}(e_{\alpha}, Y)}X, e_{\alpha}) = \bar{g}(h^{*s}(e_{\alpha}, Y), h^{s}(X, e_{\alpha}))$ then using (\ref{eq:56}), it follows that
\begin{equation}\label{eq:57}
\bar{g}(A^{*}_{h^{*s}(e_{\alpha}, Y)}X, e_{\alpha}) = 0.
\end{equation}
On using (\ref{eq:51}), (\ref{eq:55}) and (\ref{eq:57}) in (\ref{eq:46}), we derive
\begin{equation}\label{eq:58}
\sum_{\alpha=r+1}^{m}\bar{g}(S(X, e_{\alpha})Y, e_{\alpha}) = c\sum_{\alpha=r+1}^{m}\bar{g}(e_{\alpha}, Y)\bar{g}(X, e_{\alpha}) - \bar{g}(X, Y)(m - r - 1).
\end{equation}
Thus from (\ref{eq:54}) and (\ref{eq:58}), our assertion follows.
\end{proof}

\noindent\textbf{Author's Address}\\
Varun Jain\\
Department of Mathematics, Multani Mal Modi College, Patiala-147001, India.\\
E-mail: varun82jain@gmail.com\\[0.2cm]
Amrinder Pal Singh and Rakesh Kumar\\
Department of Basic and Applied Sciences, Punjabi University, Patiala-147002, Punjab, India.\\
Email: amrinderpalsinghramsingh@yahoo.com; rakesh$\_$bas@pbi.ac.in
\end{document}